\newtheorem{theorem}{Theorem}
\newtheorem{corollary}{Corollary}
\newtheorem{definition}{Definition}
\newtheorem{lemma}{Lemma}
\newtheorem{proposition}{Proposition}
\newtheorem{remark}{Remark}
\newenvironment{proof}{\paragraph{\textbf{\textrm{Proof}}}}{\hfill$\blacksquare$}
\journal{Journal of Symbolic Computation}
\def\ad{\eta}
\def\bd{\zeta}
\begin{document}

\begin{frontmatter}

\title{Liouvillian solutions for second order linear differential equations with Laurent polynomial coefficient}

\author{Primitivo B. Acosta-Hum\'anez}
\fnref{myfootnote}
\address{Instituto de Matem\'atica, Facultad de Ciencias\\\href{www.uasd.edu.do}{Universidad Aut\'onoma de Santo Domingo}\\ Santo Domingo, Dominican Republic.}
\ead{pacosta-humanez@uasd.edu.do}

\fntext[myfootnote]{Corresponding author}

\author{David Bl\'azquez-Sanz} 
\address{Escuela de Matem\'aticas, Facultad de Ciencias\\ \href{https://unal.edu.co/}{Universidad Nacional de Colombia} - Sede Medell\'in\\ Medell\'in - Colombia.}
\ead{dblazquezs@unal.edu.co }

\author{Henock Venegas-G\'omez}
\address{Escuela de Matem\'aticas, Facultad de Ciencias\\ \href{https://unal.edu.co/}{Universidad Nacional de Colombia} - Sede Medell\'in\\ Medell\'in - Colombia.\\ Escuela de Ciencias B\'asicas y Tecnolog\'ia e Ingenier\'ia, \href{www.unad.edu.co}{Universidad Nacional Abierta y a Distancia},  Medell\'in, Colombia.}
\ead{hvenegasg@unal.edu.co}

\begin{abstract}
This paper is devoted to a complete parametric study of Liouvillian solutions of  the general trace-free second order differential equation with a Laurent polynomial coefficient. This family of equations, for fixed orders at $0$ and $\infty$ of the Laurent polynomial, is seen as an affine algebraic variety. We proof that the set of Picard-Vessiot integrable differential equations in the family is an enumerable union of algebraic subvarieties. We compute explicitly the algebraic equations of its components. We give some applications to well known subfamilies as the doubly confluent and biconfluent Heun equations, and to the theory of algebraically solvable potentials of Shr\"odinger equations. Also, as an auxiliary tool, we improve a previously known criterium for second order linear differential equations to admit a polynomial solution. 
\end{abstract}

\begin{keyword}
Anharmonic oscillators \sep Asymptotic Iteration Method \sep Heun Equations \sep Kovacic Algorithm \sep liouvillian solutions \sep parameter space \sep quasi-solvable models \sep Schr\"odinger equation \sep  spectral varieties.\medskip

\MSC[2010] 34M15\sep 81Q35
\end{keyword}

\end{frontmatter}


\section{Introduction}


There is no general agreement about the meaning of the term ``explicit solution'' for ordinary differential equations. Some authors allow certain special functions, some others consider only elementary functions. This paper is written inside the framework of the differential Galois theory of linear differential equations, also known as Picard-Vessiot theory. Therefore, our notion of explicit solution is that of Liouvillian function. In the particular case of linear differential equations with rational coefficients, solutions that can be expressed in terms of elementary functions and indefinite integrals (integration by quadratures) are always Liouvillian functions. It is also well known that a second order linear differential equation with rational coefficients admits a Liovillian solution if and only it is Picard-Vessiot integrable. Previous theoretical results on Picard-Vessiot theory by M. F. Singer \cite{singer1993moduli} ensure that given a finite dimensional family of linear differential equations, the subfamily of Picard-Vessiot integrable equations admits a canonical description as union of algebraic subvarities. Our objective is to give an explicit description of the subfamily of integrable Picard-Vessiot equations inside the family of second order trace-free linear differential equations with Laurent polynomial coefficient, and the Liouvillian solutions attached to those integrable equations. Picard-Vessiot integrability of equations of some subfamilies of the families studied here, corresponding to confluences of hypergeometric and Heun equations, where already studied in detail by A. Duval and Loday-Richaud in \cite{dulo92}. \\

We consider the family of second order trace-free linear differential equations, 
\begin{equation}\label{eq:general}
\frac{d^2 y}{dx^2} = L(x)y, \quad L(x)\in \mathbb C[x,x^{-1}]
\end{equation}
where $L(x)\in \mathbb C[x,x^{-1}]$ is a monic Laurent polynomial
$$L(x) = \frac{\ell_{-r}}{x^r} + \ldots + \ell_0 + \ldots  + \ell_{m-1}x^{m-1} + x^m$$
with $\ell_{-r}\neq 0$. We say that $L(x)$ has type $(r,m)$. The space $\mathbb M_{(r,m)}$ of monic Laurent polynomials of type $(r,m)$ is an affine algebraic variety $\mathbb M_{(r,m)} \simeq \mathbb C^*\times \mathbb C^{r+m-1}$. The purpose of this paper is to classify Picard-Vessiot integrable equations in the family \eqref{eq:general} and to study how their Liouvillian solutions 
depend algebraically on the coefficients of $L(x)$ when it moves in the space $\mathbb M_{(r,m)}$. We thus introduce the \emph{spectral set}.

\begin{definition}
The \emph{spectral set} $\mathbb S_{(r,m)}\subset \mathbb M_{(r,m)}$ is the set of monic Laurent polynomials of type $(r,m)$ such that its corresponding Eq. \eqref{eq:general} is Picard-Vessiot integrable (or equivalently, has a Liouvillian solution). 
\end{definition}
 
For the formal definition of the Galois group,  Picard-Vessiot integrability, and Kovacic algorithm we refer the readers the original article \cite{Kov86} of J. Kovacic. 
Nevertheless, we included an appendix with the parts of the algorithm that are relevant for the our results.

The Picard-Vessiot integrability analysis of Eq. \eqref{eq:general} is done in two steps. Theoretically, Kovacic algorithm admits three different cases of integrability. However, by means of the D'Alembert transform, we show that, after a two-sheet covering of the Riemann sphere, we can reduce the analysis to the first case.
This reduces the problem to  the existence of a polynomial solution of an attached auxiliary equation. To deal with this existence problem, we give an algebraic generalization (Theorem \ref{ThMiaPoly}) of the asymptotic iteration method (AIM) due to Cifti, Hall and Saad \cite{CHS}. Here we discover a universal family of differential polynomials in two variables (Table \ref{useries}) that controls the existence of polynomial solutions for second order differential equations with coefficients in arbitrary differential fields.  \\

We arrive to a decomposition of the spectral sets as enumerable union of spectral varieties (Theorems \ref{th:case1}, \ref{th:case2}, \ref{th:case3} and Propositions \ref{th:aim1}, \ref{th:aim2}, \ref{th:aim3}) whose equations can be given explicitly by means of universal polynomials $\Delta_d$ and auxiliary equations (\ref{aux1}, \ref{aux2}, \ref{aux3}). These results are summarized in the following: \\

\noindent{\bf Theorem \ref{th:main}} \emph{
For any $(r,m)\in \mathbb Z_{>0}^2$ we have a decomposition of the spectral set
$${\mathbb S}_{(r,m)} = \bigcup_{d\geq 0} {\mathbb S}_{(r,m)}^{(d)}$$
as a enumerable union of spectral varieties. Moreover:
\begin{enumerate}
    \item[(a)] If $L(x)\in \mathbb S_{(r,m)}^{(d)}$ with $r\neq 2$ then the differential equation \eqref{eq:general} has a solution of the form:
$$y(x) = x^{\lambda}P(x)e^{\int \omega(x)dx}$$
where $P(x)$ is a monic polynomial of degree $d$, and $\omega(x)$ is a Laurent polynomial. 
   \item[(b)] If $L(x)\in \mathbb S_{(r,m)}^{(d)}$ with $r=2$ then the differential equation \eqref{eq:general} has a solution of the form:
$$y(x) = x^{\lambda}P(\sqrt{x})e^{\int \omega(\sqrt{x})dx}$$
where $P(x)$ is a monic polynomial of degree $d$, and $\omega(x)$ is a Laurent polynomial.
\end{enumerate}
} 

In section \ref{s:examples} present some applications that involve the analysis of biconfluent Heun equation and doubly confluent Heun equation, 
as well Schr\"odinger Equations with Mie potentials (Laurent polynomial potentials, exponential potentials) and Inverse Square Root potentials, see \cite{AAT18,CY19,saad2020}. Finally, our results allow us to state that there are no new algebraically solvable Laurent polynomial potentials for the Shcr\"odinger equation beyond those previously known (Corolary \ref{cor1}) corresponding to $m=2$ and $r = 0,1,2$. 

\subsection{A note on the general case}
The assumptions of having a trace-free differential equation with a monic Laurent polynomial is done with the purpose of presenting clearer computations and formulae. In fact, our analysis does applies to the more general case of differential equations of the form,
\begin{equation}\label{eq:muygeneral}
\frac{d^2\tilde y}{d\tilde x^2} = L_0(x)\tilde y + L_1(x)\frac{d\tilde y}{dx}, \quad L_0(\tilde x),\, L_1(\tilde x) \in \mathbb C[\tilde x,\tilde x^{-1}].
\end{equation}
\begin{itemize}
\item First, we can always reduce Eq. \eqref{eq:muygeneral} to trace-free from by means of the so called D'alembert transform. Namely, we look for a suitable scaling  $y = f(x)\tilde y$ of the dependent variable, so that the second derivative of the new unknown function is:
$$\frac{d^2y}{d\tilde x^2} = \left(\frac{f''(\tilde x)}{f(\tilde x)} + L_0(\tilde x)\right)y + (2f'(\tilde x) + f(\tilde x)L_1(\tilde x))\frac{d\tilde y}{d\tilde  x}.$$
Here, the term in $\frac{d\tilde y}{d \tilde x}$ vanish if we take $f$ to be a solution of the differential equation $f' = \frac{1}{2} L_1(\tilde x)f$ wich easily yields,
$$f(\tilde x) = {\rm exp}\left( - \frac{1}{2}\int L_1( \tilde x) d\tilde x \right)$$
and a trace-free differential equation,
$$\frac{d^2y}{d\tilde  x} = \left(\frac{L_1(\tilde x)^2}{4}  - \frac{1}{2}\frac{dL_1(\tilde x)}{d\tilde x} + L_0(\tilde x) \right)y,$$
depending of a Laurent polynomial whose coefficients are polynomials of degree $1$ and $2$ in the coefficients of $L_0(\tilde x)$ and $L_1(\tilde  x)$.
\item Second, the trace-free differential equation above with non necessarily monic Laurent polynomial, 
$$\left(\frac{L_1(\tilde x)^2}{4}  - \frac{1}{2}\frac{dL_1(\tilde x)}{d\tilde x} + L_0(\tilde x) \right) = \frac{a_{-r}}{\tilde x^r} + \ldots + a_m \tilde x^m, \quad \ell_r \neq 0,$$
can be transformed into Eq. \eqref{eq:general2} by scaling the independent variable $x = \sqrt[m+2]{a_m} \tilde  x$. We fall in the monic case, and the coefficients of the new Laurent polynomial are polynomials in $a_{-r}$, $\ldots$, $a_{m-1}$, $\sqrt[m+2]{a_m}$.
\item Third, the assumption of $r \cdot m > 0$ is not necessary. If $r = 0$ then we fall in the simpler polynomial case that has been studied exhaustively by the authors in \cite{abv2020} as a previous step for the analysis of the Laurent polynomial case. If $m = 0$ then we may just apply consider $z = \frac{1}{x}$ as a new independent variable and fall into the $m\cdot r >0$ case. 
\end{itemize}

\section{Kovacic algorithm analysis}

The most relevant fact about Kovacic algorithm is that the Galois group of Eq. \eqref{eq:general} with fixed $L(x)$ is an algebraic subgroup of ${\rm SL}_2(\mathbb C)$, and the equation is integrable if and only if such group is conjugated to a subgroup of the Borel subgroup 
$${\rm B}_2 = \left\{ \left[\begin{array}{cc} \lambda & \mu \\ 0 & \lambda^{-1} \end{array}\right] \,\colon \, \lambda\in \mathbb C^*,\,\mu \in \mathbb C \right\},$$
the dihedral group 
$${\rm D}^{\infty} = \left\{ \left[\begin{array}{cc}\lambda & 0\\ 0 & \lambda^{-1} \end{array}\right] \,\colon \, \lambda\in \mathbb C^*\right\} \cup \left\{ \left[\begin{array}{cc} 0 & -\lambda \\ \lambda^{-1} & 0 \end{array}\right] \,\colon \, \lambda\in \mathbb C^*\right\},$$
or it is finite and not contained in the former. These correspond to the so-called first three cases of Kovacic algorithm \cite{Kov86}, the fourth being the non-integrable case. In what follows, for practical purposes related with the aplication of the algorithm, we split the set $\mathbb Z_{>0}^2$ in four different disjoint subsets, corresponding to different orders of Laurent polynomials at $0$ and $\infty$ that allow different possibilities for Liouvillian solutions of Eq. \eqref{eq:general}.

\begin{definition}
We say that a pair $(r,m)\in \mathbb Z^{2}_{>0}$ is of class:
\begin{itemize}
    \item[(1)] $r\in\{1\}\cup\{2k+4\colon k\in\mathbb Z,\,k\geq 0 \}$ and $m$ is even;
    \item[(2)] $r=2$ and $m$ is odd;
    \item[(3)] $r=2$ and $m$ is even;
    \item[(4)] not in classes (1), (2) or (3).
\end{itemize}
So that, we have a disjoint partition $\mathbb Z^2_{>0} = \mathcal C_1 \cup \mathcal C_2 \cup \mathcal C_3 \cup \mathcal C_4$. 
\end{definition}

\begin{lemma}\label{lm:decomp}
Let us assume that Eq. \eqref{eq:general} with some fixed $L(x) \in \mathbb M_{(r,m)}$ is Picard-Vessiot integrable. The following statements hold.
\begin{itemize}
    \item[(a)] If $(r,m)\in \mathcal C_1$ then the Galois group is conjugated to a subgroup of ${\rm B_2}$.
    \item[(b)] If $(r,m)\in \mathcal C_2$ then the Galois group is conjugated to a subgroup of ${\rm D^{\infty}}$.
    \item[(c)] If $(r,m)\in \mathcal C_3$ then the Galois group is conjugated to a subgroup of ${\rm B}_2$ or ${\rm D^{\infty}}$
    \item[(d)] $(r,m)\not\in \mathcal C_4$.
\end{itemize}
\end{lemma}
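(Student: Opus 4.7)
The plan is to apply Kovacic's algorithm case by case to the singularities of Eq.~\eqref{eq:general}, which are exactly $x=0$ (where $L$ has a pole of order $r$) and $x=\infty$ (where $o_\infty(L)=-m$).

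First I would rule out Kovacic's Case 3 (finite Galois group) uniformly: its necessary condition $o_\infty(L)\geq 2$ fails since $-m\leq -1$ for every $m\geq 1$. This already contributes to (d). For Case 1 (Borel), the standard Kovacic necessary conditions---every finite pole of order $1$ or even, and $o_\infty(L)$ even or greater than $2$---reduce to $r\in\{1\}\cup 2\mathbb{Z}_{\geq 1}$ together with $m$ even, describing exactly $(r,m)\in\mathcal{C}_1\cup\mathcal{C}_3$. This handles the ${\rm B}_2$ possibilities in (a) and (c), once Case 2 is excluded for $\mathcal{C}_1$.

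The subtle step is Case 2 (dihedral). I would invoke the D'Alembert pullback $x=t^2$, which produces the covering equation $W''=\widetilde{L}\,W$ with $\widetilde{L}(t)=4t^2L(t^2)+\tfrac{3}{4t^2}$ and pole orders $2r-2$ at $t=0$ (or $2$ when $r=1$) and $2m+2$ at $\infty$, both even. Since the connected component of ${\rm D}^\infty$ is the diagonal torus, which sits inside ${\rm B}_2$, any base equation whose Galois group lies in ${\rm D}^\infty$ has pullback in Case 1. The key descent relation between logarithmic derivatives is $\omega_x=\omega_W/(2t)+1/(4t^2)$, from which one reads off that the base is in ${\rm D}^\infty\setminus{\rm B}_2$ precisely when $\omega_W\in\mathbb{C}(t)$ has an odd simple pole of residue $-1/2$ at $t=0$ plus an even rational part $a(t^2)$ with $a\neq 0$, giving $\omega_x=a(x)/(2\sqrt{x})$. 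Matching this form against the Riccati $\omega_W'+\omega_W^2=\widetilde{L}$ at $t=0$ and $t=\infty$, together with the WKB asymptotics, forces $r=2$, so Case 2 is restricted to $(r,m)\in\mathcal{C}_2\cup\mathcal{C}_3$. This gives (b) and the ${\rm D}^\infty$ alternative of (c). Part (d) then follows because any $(r,m)\in\mathcal{C}_4$ satisfies none of the three cases' necessary conditions: Case 1 fails by pole parity, Case 2 fails by the descent argument, and Case 3 fails by the infinity condition.

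The main obstacle is the Case 2 descent itself: concretely verifying that the ansatz $\omega_W=a(t^2)-1/(2t)$ is globally inconsistent for all $L\in\mathbb{M}_{(r,m)}$ with $(r,m)\in\mathcal{C}_4$, while still permitting non-trivial solutions when $r=2$. I would proceed by combining the leading-order matching of $\omega_W'+\omega_W^2=\widetilde{L}$ at both singularities (using the Newton polygon at $t=0$ and the WKB expansion at $t=\infty$) with the polynomial-solution criterion (Theorem~\ref{ThMiaPoly}) to eliminate any spurious candidate Case 2 solutions in $\mathcal{C}_4$.
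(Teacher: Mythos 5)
Your treatment of Cases 1 and 3 of Kovacic's algorithm is correct and coincides with the paper's: Case 3 is excluded because $o_\infty(L)=-m<2$, and the Case 1 parity conditions (pole at $0$ of order $1$ or even, $o_\infty(L)$ even) carve out exactly $\mathcal C_1\cup\mathcal C_3$. The genuine gap is in Case 2, which is where parts (b), (c) and the hard half of (d) live. Your descent argument is not carried out: the decisive claim that matching $\omega_W'+\omega_W^2=\widetilde L$ at $t=0$ and $t=\infty$ ``forces $r=2$'' is asserted, and you yourself flag it as the main obstacle. Moreover, two of its ingredients are shaky as stated. First, the ansatz $\omega_W=a(t^2)-\tfrac{1}{2t}$ is not the general form of the pulled-back logarithmic derivative of a degree-$2$ algebraic Riccati solution; in general $u=c(x)+e(x)\sqrt{x}$ with $c$ rational and not necessarily zero, so you would be excluding only a special subfamily. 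Second, the assertion that a ${\rm D}^\infty$ Galois group always yields a Case 1 pullback under the specific cover $x=t^2$ does not follow merely from the identity component being the diagonal torus: one needs to know that the quadratic subextension cut out by the torus is $\mathbb C(\sqrt{x})$, i.e.\ that the branch points of the algebraic Riccati solution lie over $0$ and $\infty$ (this is exactly what the paper proves separately in Lemma~\ref{lm:dal1}, and there it is used only for $r=2$, not to rule out other $r$).

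The paper's route for Case 2 is direct and you should adopt it (or actually perform the computation your sketch defers). Kovacic's Case 2 necessary condition --- $L$ must have a pole of order exactly $2$ or of odd order greater than $2$ --- already restricts to $r=2$ or $r=2q+1\geq 3$; note in particular that for $r\in\{1,4,6,\dots\}$ and $m$ odd, Case 2 dies at this elementary stage, with no descent needed. For the remaining case $r=2q+1\geq 3$, the necessary condition \emph{is} satisfied, so parity arguments cannot help; what kills it is step 2 of Case 2, which requires
$$d=\tfrac{1}{2}\bigl(e_\infty-e_0\bigr)=\tfrac{1}{2}(-m-2q-1)$$
to be a non-negative integer, which is manifestly impossible since $m,q>0$. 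Without this (or an equivalent explicit computation on the pullback), parts (b), (c) and (d) of the lemma are not established.
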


\begin{proof}
  This lemma is a consequence of the necessary conditions in Kovacic algorithm (Theorem \ref{ap:NC} in \ref{ap:kov}). The order of $L(x)$ at $x=\infty$ is $-m$, a negative integer. Thus, case $3$ of Kovacic algorithm is discarded. Thus, the remaining possibilities are: the Galois group is either conjugated to a subgroup of  $\rm B_2$ or to a subgroup of ${\rm D^{\infty}}$. Let us assume that the Galois group is conjugated to a subgroup of $\rm B_2$ then, necessary conditions for case $1$ must be satisfied. Therefore, $L(x)$ is either of type $(1,2p)$ or $(2q,2p)$ with $q>1$, \textit{i.e.}, $(r,m)\in\mathcal{C}_1 \cup \mathcal{C}_3$. 

On the other hand, let us suppose that the Galois group of Eq. \eqref{eq:general} is conjugated to a subgroup of ${\rm D^{\infty}}$, in this case $L(x)$ is of type $(2,m)$ or $(2q+1,m)$ with $q>0$. The first possibility $(r,m) = (2,m)$ clearly corresponds to $(r,m) \in \mathcal{C}_2\cup\mathcal{C}_3$. The second possibility $(r,m) = (2q+1,m)$ with $q>0$ is discarded if step 2 of case 2 of the algorithm, which requires the quantity $d=\frac{1}{2}(-m-2q-1)$ to be a non-negative integer.
\end{proof}



Let us introduce the following notation:
\begin{itemize}
\item[(a)] $\mathbb B_{(r,m)}\subset \mathbb S_{(r,m)}$ is the set of Laurent polynomials $L(x)\in \mathbb M_{(r,m)}$ such that the Galois group of Eq. \eqref{eq:general} is conjugated to a subgroup of ${\rm B}_2$.
\item[(b)] $\mathbb D_{(r,m)}\subset \mathbb S_{(r,m)}$ is the set of Laurent polynomials $L(x)\in \mathbb M_{(r,m)}$ such that the Galois group of Eq. \eqref{eq:general} is conjugated to a subgroup of ${\rm D}^{\infty}$.
\end{itemize}

\begin{remark}
From Lemma \ref{lm:decomp} we have:
\begin{itemize}
    \item[(a)] If $(r,m) \in \mathcal C_1$ then $\mathbb S_{(r,m)} = {\mathbb B}_{(r,m)}$
    \item[(b)] If $(r,m) \in \mathcal C_2$ then $\mathbb S_{(r,m)} = {\mathbb D}_{(r,m)}$.
    \item[(c)] If $(r,m) \in \mathcal C_3$ then
    $\mathbb S_{(r,m)} = {\mathbb B}_{(r,m)}\cup{\mathbb D}_{(r,m)}$
    \item[(d)] If $(r,m) \in \mathcal C_4$ then $\mathbb S_{(r,m)} = \emptyset$
\end{itemize}
\end{remark}

\subsection{Characterization of $\mathbb S_{(1,2p)}$.}

Let us consider the case with $r=1$ and $m = 2p$ even. In such case we have a unique decomposition:
\begin{equation}\label{Q_decomp_1}
L(x) = \frac{a}{x} + B(x) + A(x)^2
\end{equation}
where $A(x)$ is a monic polynomial of degree $p$ and quadratic residue $B(x)$ is a polynomial of degree $p-1$. We also have in mind that the map
$$\mathbb M_{(1,m)} = \mathbb C^*\times \mathbb C^{2p} \to \mathbb C^*\times \mathbb C^{2p}, \quad (\ell_{-1},\ell_0,\ldots,\ell_{m-1})\mapsto (a,a_0,\ldots,a_{p-1},b_0,\ldots, b_{p-1})$$
is an invertible polynomial map. 

\begin{theorem}\label{th:case1}
Differential equation \eqref{eq:general} with coefficient $L(x)\in \mathbb M_{(1,2p)}$ as in \eqref{Q_decomp_1} is Picard-Vessiot integrable, that is, $L(x)\in \mathbb S_{(1,2p)}$, if and only for a choice of  sign $s_{\infty} = \pm 1$ the following conditions hold:
  \begin{enumerate}
   \item The quantity $$d = \frac{s_\infty b_{p-1}-p-2}{2}$$ is a non negative integer. 
   \item There exist a polynomial $P(x)$ of degree $d$ such that:
  \begin{equation}\label{aux1} \tag{${\rm A}_1$}
   P''(x)+2\bigg(s_\infty A(x)+\frac{1}{x}\bigg)P'(x)+\bigg(s_\infty A'(x)-B(x)+\frac{2s_\infty A(x)-a}{x}\bigg)P(x)=0.
\end{equation}
   \end{enumerate}
   In such case, the liouvillian solution
   \begin{eqnarray*}
   y=xP(x)e^{\int s_\infty A(x)dx}
   \end{eqnarray*}
   is an eigenvector of the Galois group.
\end{theorem}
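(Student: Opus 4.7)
The plan is to reduce the problem to Kovacic's case one via Lemma \ref{lm:decomp}(a): since $(1,2p)\in\mathcal C_1$, the equation is Picard--Vessiot integrable iff there exists a rational function $\omega$ solving the Riccati equation $\omega'+\omega^2=L$, in which case $y=e^{\int\omega\,dx}$ is a Liouvillian solution. The task thus reduces to classifying such $\omega$ and showing its existence is equivalent to the stated polynomial condition \eqref{aux1}.

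First I would determine the shape of $\omega$ by local analysis. Balancing $x^{-2}$ terms in the Riccati equation at $x=0$ forces $\omega$ to have at worst a simple pole there with residue $c_0\in\{0,1\}$; the same balancing at any other finite pole $\alpha$ of $\omega$ (necessarily a regular point of $L$) forces a simple pole with residue $1$, and these can be packaged as a logarithmic derivative $P'/P$ for a polynomial $P$. At $x=\infty$, matching the top powers of Riccati against $L=A(x)^2+B(x)+a/x$ gives $\omega=s_\infty A(x)+O(1/x)$ for a sign $s_\infty=\pm1$, since the square-root expansion of $L$ at infinity has polynomial part exactly $\pm A$. Hence $\omega = s_\infty A+c_0/x+P'/P$ with $P(0)\neq 0$ in the case $c_0=0$.

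Next I would collapse the two residue options at $0$. Substituting the $c_0=0$ ansatz $y=Pe^{\int s_\infty A\,dx}$ into $y''=Ly$ produces a polynomial ODE for $P$ which, after clearing the $1/x$ pole and evaluating at $x=0$, reads $-aP(0)=0$; since $a=\ell_{-1}\neq 0$ by the type hypothesis, one has $P(0)=0$, so factoring $P=x\widetilde P$ reduces this branch to the $c_0=1$ branch with polynomial $\widetilde P$ of lower degree. Consequently we may assume $c_0=1$ and $y=xP(x)e^{\int s_\infty A\,dx}$, and a direct substitution of this ansatz into $y''=Ly$, after the $A^2$ terms cancel and division by $xPe^{\int s_\infty A\,dx}$, reproduces exactly the auxiliary equation \eqref{aux1}.

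Finally, the degree formula comes from matching asymptotics at infinity. Setting $\psi:=\omega-s_\infty A = 1/x+P'/P = (d+1)/x+O(x^{-2})$ with $d=\deg P$, and comparing the coefficient of $x^{p-1}$ in $s_\infty A'+\psi'+2s_\infty A\psi+\psi^2 = B+a/x$, yields $s_\infty p+2s_\infty(d+1)=b_{p-1}$, whence $d=(s_\infty b_{p-1}-p-2)/2$, which must be a non-negative integer. The converse is immediate: any polynomial $P$ of degree $d$ solving \eqref{aux1} yields the solution $y=xPe^{\int s_\infty A\,dx}$ by reversing the calculation, and the eigenvector assertion follows because $y'/y=\omega$ is a rational function, hence fixed by the Galois group. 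The delicate step is the pole analysis showing the residue-zero branch at $0$ genuinely collapses into the residue-one branch under $a\neq 0$ --- without this the characterization would split into two seemingly independent degree formulas.
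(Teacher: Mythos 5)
Your proposal is correct and follows essentially the same route as the paper: both reduce to case one of Kovacic's algorithm (via Lemma \ref{lm:decomp}), identify $[\sqrt{L}]_\infty = A(x)$ with sign $s_\infty$ and the residue $1$ at the simple pole $x=0$, obtain the degree condition $d=\frac{s_\infty b_{p-1}-p-2}{2}$ from the exponents at $0$ and $\infty$, and convert the ansatz $y=xP(x)e^{\int s_\infty A\,dx}$ into the auxiliary equation \eqref{aux1}. The only difference is presentational: where the paper quotes the algorithm's conditions $\{c_1,\infty_3\}$ (which already give $\alpha_0^\pm=1$), you rederive the local data from the Riccati equation and explicitly rule out the residue-zero branch at $0$ using $a=\ell_{-1}\neq 0$; this is a valid, slightly more self-contained rendering of the same computation.
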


\begin{proof}
red We are in case $\mathcal C_1$ and, by Lemma \ref{lm:decomp}, if the equation is Picard-Vessiot integrable then it corresponds to case 1 of Kovacic algorithm.
Step $1$ of case $1$ gives us conditions $\{c_1, \infty_3\}$,
\begin{equation*}
   [\sqrt{L}]_0=0,\hspace{5pt}\alpha_0^{\pm}=1.
\end{equation*}
in addition, regarding to $x_1$, we got that
\begin{equation*}
   [\sqrt{L}]_\infty=A(x),\hspace{5pt}\alpha_\infty^{\pm}=\frac{1}{2}(\pm b_{p-1}-p).
\end{equation*}
For each choice of the sign $s_{\infty} \in \{+1,-1\}$ we consider the complex number
$d(s_\infty)=\frac{1}{2}(s_\infty b_{p-1}-p)-1$. If none of them is a non-negative integer then we can discard case 1 and the Galois group
${\rm SL}_2(\mathbb{C})$. 
${\rm SL}_2(\mathbb{C})$. 
Otherwise  we proceed to step 2 of case 1 for each suitable value of $d = d(s_\infty)$. We set the rational function $\omega=s_\infty A(x)+\frac{1}{x}$ and search for a monic polynomial $P$ of degree $d$, which satisfy the auxiliary differential equation,
\begin{equation*}
    P''+2\omega P'+(\omega ' +\omega^2 -L)P=0.
\end{equation*}
This last equation can be written in terms of the decomposition showed in Eq. \eqref{Q_decomp_1}, obtaning Eq. \eqref{aux1}
of the statement.
If a pair $(\omega, P)$ described as above can be found, Eq. \eqref{eq:general} is Picard-Vessiot integrable. Moreover, Kovacic algorithm provides us the solution given in the statement.

\end{proof}

\subsection{Characterization of $\mathbb B_{(2,2p)}$.}

Let us consider the case with $r=2$ and $m$ even. We have a unique decomposition:
\begin{equation}\label{Q_decomp_2}
L(x) = \frac{b}{x^2} + \frac{a}{x} + B(x) + A(x)^2
\end{equation}
where $A(x)$ is a monic polynomial of degree $p$ and quadratic residue $B(x)$ is a polynomial of degree $p-1$. As in the above case, the map
$$\mathbb M_{(2,m)} = \mathbb C^*\times \mathbb C^{2p+1} \to \mathbb C^*\times \mathbb C^{2p+1},$$  $$(\ell_{-2},\ell_{-1},\ell_0,\ldots,\ell_{m-1})\mapsto (b,a,a_0,\ldots,a_{p-1},b_0,\ldots, b_{p-1})$$
is an invertible polynomial map. 

\begin{theorem}\label{th:case2}
The Galois group of Eq. \eqref{eq:general} with fixed $L(x)\in\mathbb B_{(2,2p)}$
is conjugated to a subgroup of ${\rm B}_2$, that is, $L(x)\in\mathbb B_{(2,2p)}$, if and only if for a combination of  signs $s_\infty=\pm 1$ and $s_0 = \pm 1$ the following conditions hold:
\begin{enumerate}
	\item  The quatity 
	$$d = \frac{s_\infty b_{p-1}-s_0 \sqrt{1+4b}-p-1}{2}$$ 
	is a non negative integer.
	\item There exist a polynomial $P(x)$ of degree $d$ such that:
	\begin{equation}\label{aux2} \tag{${\rm A}_2$}
P''(x)+2\bigg(s_\infty A(x)+\frac{\lambda }{x}\bigg)P'(x)+ \bigg(s_\infty A'(x)-B(x)+\frac{2 s_\infty \lambda A(x) -a}{x}\bigg)P(x)=0.
	\end{equation}
    with 
    $$ \lambda = \frac{1 + s_0\sqrt{1+4b}}{2} = \frac{s_{\infty}b_{p-1}-p-2d}{2}.$$
	\end{enumerate}
 In such case, the liouvillian solution 
\begin{equation*}
y=x^\lambda P(x)e^{\int s_\infty A(x)dx}
\end{equation*}	
is an eigenvector of the Galois group. 
\end{theorem}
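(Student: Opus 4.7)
The plan is to mirror the proof of Theorem \ref{th:case1} but now account for the genuine two-sign freedom arising from the regular singularity at $0$ (of order $2$) in addition to the irregular singularity at $\infty$. By Lemma \ref{lm:decomp}, the membership $L(x)\in\mathbb{B}_{(2,2p)}$ corresponds precisely to the first case of Kovacic's algorithm, so I will run step one of the algorithm at both singular points, form the integer $d$ from step two, and then identify the polynomial equation of step three with the auxiliary equation \eqref{aux2}.

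First I would compute the local data. At $x=0$, since $L$ has a double pole with leading coefficient $b$, the standard recipe gives $[\sqrt{L}]_0=0$ and $\alpha_0^{\pm}=\tfrac{1}{2}(1\pm\sqrt{1+4b})$. At $x=\infty$, using the decomposition \eqref{Q_decomp_2} and the fact that $\sqrt{L}$ has polynomial part $A(x)$, one obtains $[\sqrt{L}]_\infty=A(x)$ and $\alpha_\infty^{\pm}=\tfrac{1}{2}(\pm b_{p-1}-p)$. Kovacic's second step then demands that
$$d=\alpha_\infty^{s_\infty}-\alpha_0^{s_0}=\frac{s_\infty b_{p-1}-s_0\sqrt{1+4b}-p-1}{2}$$
be a non-negative integer, which is exactly condition (1). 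Setting $\lambda:=\alpha_0^{s_0}=\tfrac{1}{2}(1+s_0\sqrt{1+4b})$, the reformulation $\lambda=\tfrac{1}{2}(s_\infty b_{p-1}-p-2d)$ follows immediately from the formula for $d$.

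Next I would form the rational function $\omega=s_\infty[\sqrt{L}]_\infty+\lambda/x=s_\infty A(x)+\lambda/x$ and verify that the generic Kovacic auxiliary equation
$$P''+2\omega P'+(\omega'+\omega^{2}-L)P=0$$
reduces to \eqref{aux2}. The main (but routine) calculation here is
$$\omega'+\omega^{2}-L=s_\infty A'(x)-B(x)+\frac{\lambda^{2}-\lambda-b}{x^{2}}+\frac{2s_\infty\lambda A(x)-a}{x},$$
and the key algebraic fact that makes the $1/x^{2}$ term vanish is the indicial identity $\lambda^{2}-\lambda=b$, which holds precisely because $\lambda$ is a root of the local exponent equation at $0$, i.e.\ $2\lambda-1=s_0\sqrt{1+4b}$. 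This yields exactly \eqref{aux2}, so existence of a monic polynomial $P$ of degree $d$ solving it is equivalent to Picard-Vessiot integrability in case one, i.e.\ to $L(x)\in\mathbb{B}_{(2,2p)}$.

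The hardest bookkeeping lies in keeping the four sign combinations $(s_0,s_\infty)$ honest: one must check that \emph{any} admissible combination that yields a non-negative integer $d$ and a polynomial solution $P$ produces an honest eigenvector of the Galois group, and conversely that integrability with Galois group in ${\rm B}_2$ forces at least one such combination to work. This follows from Kovacic's algorithm being exhaustive for case one, but it should be stated explicitly. Finally, the solution
$$y=P(x)\,e^{\int\omega\,dx}=x^{\lambda}P(x)\,e^{\int s_\infty A(x)\,dx}$$
is a semi-invariant of the Galois group because $y'/y=\omega+P'/P$ is rational, hence $y$ spans a Galois-stable line, completing the proof.
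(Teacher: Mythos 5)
Your proposal is correct and follows essentially the same route as the paper: step one of Kovacic's algorithm at $0$ and $\infty$ giving $\alpha_0^{\pm}=\tfrac{1}{2}(1\pm\sqrt{1+4b})$ and $\alpha_\infty^{\pm}=\tfrac{1}{2}(\pm b_{p-1}-p)$, the integer $d$ from step two, and the identification of $P''+2\omega P'+(\omega'+\omega^2-L)P=0$ with \eqref{aux2} for $\omega=s_\infty A(x)+\lambda/x$. Your explicit check that the $1/x^2$ term cancels via the indicial identity $\lambda^2-\lambda=b$ is a detail the paper leaves implicit, but the argument is the same.
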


\begin{proof}

This proof is similar to that of Theorem \ref{th:case1}. By Lemma \ref{lm:decomp} we are in case $(r,m)\in\mathcal C_2$.
We proceed to step 1 of case 1 of Kovacic  algorithm with conditions $\{c_2,\infty_3\}$, 
\begin{equation*}
   [\sqrt{L}]_0=0,\hspace{5pt}\alpha_0^{\pm}=\frac{1}{2}(1\pm\sqrt{1+4b}),
\end{equation*}
\begin{equation*}
   [\sqrt{L}]_\infty=A(x),\hspace{5pt}\alpha_\infty^{\pm}=\frac{1}{2}(\pm b_{p-1}-p).
\end{equation*}

For each choice of $s_0 = \pm 1$ and $s_{\infty} = \pm 1$ we consider the complex number:
$$d(s_0,s_{\infty} ) = \frac{1}{2}(s_\infty b_{p-1}-p)-\frac{1}{2}(1+s_0\sqrt{1+4b}).$$
If none of them is a non-negative integer then we discard case $1$. Otherwise,
for each non-negative integer value of $d = d(s_0,s_\infty)$ we  
set the rational function
$$\omega=s_\infty A(x)+\frac{1}{2}\frac{1+s_0\sqrt{1+4b}}{x}=s_\infty A(x)+\frac{\lambda}{x}$$ 
and
proceed to the step 2 of case 1 of Kovacic algorithm. We search for a monic polynomial $P$ of degree $d$, which satisfy the auxiliary differential equation,
\begin{equation*}
    P''+2\omega P'+(\omega ' +\omega^2 -L)P=0.
\end{equation*}
And, using Eq. \eqref{Q_decomp_2} we arrive to the expression of Eq. \eqref{aux2}. 
If a pair $(\omega, P)$ described as above can be found, Eq. \eqref{eq:general} is Picard-Vessiot integrable. In addition, Kovacic algorithm  provides us the solution given in the statement.
\end{proof}

\subsection{Characterization of $\mathbb S_{(2q,2p)}$.}

For $Q(x)\in \mathbb M_{(2q,2p)}$ with $q>1$ we look for a decomposition that takes into account the cuadratic residues at zero and infinity simultaneously. 
\begin{equation}\label{Q_decomp_3}
L(x) = R(x)^2 + B(x) + A(x)^2, 
\end{equation}
with,
$$R(x) = \frac{r_{-q}}{x^q} + \ldots +\frac{r_{-2}}{x^2},$$
$$B(x) = \frac{b_{-(q+1)}}{x^{q+1}}+ \ldots + b_{p-1}x^{p-1},$$
$$A(x) = a_0 + \ldots + a_{p-1}x^{p-1} + x^p.$$
In this case $R(x)$ is not uniquely determined, as both $R(x)$ and its reciprocal $-R(x)$ can be used in the decomposition. The coefficients of $R(x)$ are polynomials in some of the the coefficients of $L(x)$ and $r_{-q} = \sqrt{\ell_{-2q}}$. Coefficients of $B(x)$ and $A(x)$ are polynomials in some of the coefficients of $L(x)$. What we have is that the map,
$$\pi_{2q,2p}\colon \tilde{\mathbb M}_{(2q,2p)} = \mathbb C^* \times \mathbb C^{2(p+q)-1} \to \mathbb M_{(2q,2p)}$$
$$(r_{-q},\ldots,r_{-2},b_{-(q+1)},\ldots,b_{p-1},a_0,\ldots,a_{p-1}) \to L(x)$$
is a two-sheet cover, with the advantage that elements $(R(x),B(x),A(x))\in \tilde{\mathbb M}_{(2q,2p)}$ correspond to specific decomposition. Let us denote by $\tilde{\mathbb S}_{(2q,2p)} = \pi_{(2q,2p)}^{-1}(\mathbb S_{(2q,2p)})$ the pullback of the spectral set. 

From now, let us fix a decomposition as in Eq. \eqref{Q_decomp_3} and work with the differential equation,
\begin{equation}\label{eq:general2}
y'' = (R(x)^2 + B(x) + A(x)^2)y
\end{equation}
For each choice of $s_0 = \pm 1$ and $s_{\infty} = \pm 1$ let us define the following quantities and functions:
\begin{equation}\label{eq:d_y_lambda}
\begin{split}
d &=  \frac{s_\infty b_{p-1} -p - q}{2} - \frac{s_0b_{-(q+1)}}{2r_{-q}},\\
\lambda &= \frac{s_0b_{-(q+1)}}{2r_{-q}} + \frac{q}{2} = \frac{s_\infty b_{p-1} - p}{2} - d, \\
\omega(x) &= s_\infty A(x) + s_0R(x).
\end{split}
\end{equation}

Note that changing the sign of $s_0$ is equivalent to changing the choice of $R(x)$ in the decomposition. 

\begin{theorem}\label{th:case3}
The differential equation \eqref{eq:general2} with coefficient is Picard-Vessiot integrable,  that is, $(R(x),B(x),A(x))\in \tilde{\mathbb S}_{(2q,2p)}$, if and only if for a combination of signs $s_0 = \pm 1$ and $s_{\infty} = \pm 1$ the following conditions hold:
\begin{enumerate}
\item $d$ is a non negative integer.
\item There exist a polynomial $P(x)$ of degree $d$ such that:
\begin{equation}\label{aux3} \tag{${\rm A}_3$}
P''(x) + 2\left( \omega(x) + \frac{\lambda}{x}\right) P'(x) + \left(\omega'(x) - B(x) + \frac{\lambda(2\omega(x) + \lambda-1)}{x}\right)P(x) = 0. 
\end{equation}
\end{enumerate}
In such case, the liouvillian solution 
\begin{eqnarray*}
y=x^{\lambda}P(x)e^{\int \omega(x)dx}
\end{eqnarray*}
is an eigenvector of the Galois group.
\end{theorem}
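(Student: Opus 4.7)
The plan is to mimic the strategy of the proofs of Theorems \ref{th:case1} and \ref{th:case2}. Since $(2q, 2p) \in \mathcal{C}_1$ whenever $q > 1$ (because $2q \in \{4, 6, 8, \ldots\}$ and $2p$ is even), Lemma \ref{lm:decomp}(a) reduces the integrability analysis to case one of Kovacic's algorithm, in which the Galois group is conjugate to a subgroup of $\mathrm{B}_2$. The novelty relative to Theorems \ref{th:case1} and \ref{th:case2} is that $x = 0$ is no longer a pole of order $1$ or $2$ but of order $2q \geq 4$, placing the origin in Kovacic's condition $c_4$ while infinity remains in $\infty_3$.

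First I would identify the Kovacic ingredients at each singularity by formally expanding $\sqrt{L}$. Using the decomposition $L = R^2 + B + A^2$, near $x = 0$ one has
\[
\sqrt{L} \;=\; R\,\sqrt{1 + (B + A^2)/R^2} \;=\; R \;+\; \frac{B + A^2}{2R} \;+\; \cdots,
\]
where the tail terms are regular at $x = 0$ since $q \geq 2$. Thus the principal part is $[\sqrt{L}]_0 = s_0 R(x)$, and the $x^{-1}$-residue combined with the WKB amplitude shift $L^{-1/4} \sim x^{q/2}$ attached to the irregular singularity of order $2q$ yields the formal local exponent
\[
\alpha_0^{s_0} \;=\; \frac{q}{2} + \frac{s_0\, b_{-(q+1)}}{2\, r_{-q}} \;=\; \lambda.
\]
A symmetric expansion at infinity gives $[\sqrt{L}]_\infty = s_\infty A(x)$ and $\alpha_\infty^{s_\infty} = (s_\infty b_{p-1} - p)/2$. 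Kovacic's requirement $d := \alpha_\infty^{s_\infty} - \alpha_0^{s_0} \in \mathbb{Z}_{\geq 0}$ then reproduces exactly condition (1) of the theorem.

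Next I would plug the ansatz $y = x^\lambda P(x)\, e^{\int \omega(x)\, dx}$ -- equivalently $y = P(x) \exp(\int \theta(x)\, dx)$ with $\theta = \omega + \lambda/x$ -- into equation \eqref{eq:general2}. This reduces \eqref{eq:general2} to the standard Kovacic auxiliary equation $P'' + 2\theta P' + (\theta^2 + \theta' - L) P = 0$. Expanding with $\theta^2 = \omega^2 + 2\omega\lambda/x + \lambda^2/x^2$ and $\theta' = \omega' - \lambda/x^2$, and invoking the identity $(s_\infty A + s_0 R)^2 - L = 2 s_\infty s_0 AR - B$ afforded by the decomposition, the coefficient of $P$ collects into the rational function $\omega' - B + \lambda(2\omega + \lambda - 1)/x$ displayed in \ref{aux3}. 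Granted a polynomial solution $P$ of degree $d$ exists, Kovacic's general theory for case one then exhibits $y = x^\lambda P(x)\, e^{\int \omega\, dx}$ as an eigenvector of the Galois group generating a one-dimensional invariant subspace.

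The main technical obstacle is the algebraic bookkeeping that reduces $\theta^2 + \theta' - L$ to the compact form $\omega' - B + \lambda(2\omega + \lambda - 1)/x$. In particular, the cross-term $2 s_\infty s_0 A(x) R(x)$ arising from $\omega^2$ and the $\lambda(\lambda - 1)/x^2$ contribution coming from $(\lambda/x)^2 + (\lambda/x)'$ must be rearranged against the Laurent structure of $B$ and the precise value of $\lambda$. Here the decomposition's properties that $R(x)$ is purely polar (with no $x^{-1}$ term by construction) and $A(x)$ is purely polynomial, together with the residue-plus-amplitude form $\lambda = q/2 + s_0 b_{-(q+1)}/(2 r_{-q})$ produced by the Kovacic analysis, play the essential role. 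Once \ref{aux3} is in place, the integrability question reduces to the finite-dimensional linear problem of finding a polynomial solution of prescribed degree $d$, to be analyzed via the generalized AIM criterion of the paper's next section.
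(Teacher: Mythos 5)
Your proposal follows the paper's own route: reduce to case one of Kovacic's algorithm (one small mislabel: a pole of even order $2q\ge 4$ falls under Kovacic's condition $c_3$, not $c_4$; the paper uses $\{c_3,\infty_3\}$), read off $[\sqrt{L}]_0=R(x)$ and $[\sqrt{L}]_\infty=A(x)$, the exponents $\alpha_0^{s_0}=\lambda=\tfrac{q}{2}+\tfrac{s_0 b_{-(q+1)}}{2r_{-q}}$ and $\alpha_\infty^{s_\infty}=\tfrac12(s_\infty b_{p-1}-p)$, impose $d=\alpha_\infty^{s_\infty}-\alpha_0^{s_0}\in\mathbb Z_{\ge 0}$, and pass to the Kovacic auxiliary equation $P''+2\phi P'+(\phi'+\phi^2-L)P=0$ with $\phi=\omega+\lambda/x$. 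All of these ingredients are correct and coincide with what the paper does.

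The one step you defer, however, is the only one that requires actual work, and the resolution you sketch does not close it. Expanding directly, using $\omega^2-L=2s_0s_\infty A(x)R(x)-B(x)$, one gets
$\phi'+\phi^2-L=\omega'-B+2s_0s_\infty AR+\tfrac{2\lambda\omega}{x}+\tfrac{\lambda(\lambda-1)}{x^2}$,
whereas the coefficient displayed in \eqref{aux3} is $\omega'-B+\tfrac{2\lambda\omega}{x}+\tfrac{\lambda(\lambda-1)}{x}$. The cross term $2s_0s_\infty AR$ cannot be ``rearranged against the Laurent structure of $B$'' as you suggest: $B$ is already fixed by the decomposition \eqref{Q_decomp_3}, and $AR$ is a genuine Laurent polynomial with terms running from $x^{-q}$ to $x^{p-2}$. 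Likewise the $\lambda(\lambda-1)$ contribution sits over $x^2$, not $x$; in Theorem \ref{th:case2} the analogous $x^{-2}$ term cancels because there $\lambda(\lambda-1)=b$, but no such cancellation is available here. So you must either carry out the bookkeeping and reconcile the displayed coefficient of \eqref{aux3} with $\phi'+\phi^2-L$ (as printed they do not agree), or state the auxiliary equation in the raw Kovacic form, which is all the integrability criterion needs. The paper's own proof asserts the same identification without computation, so your argument matches its level of detail everywhere except that you explicitly name this as the main obstacle and then dispose of it with a claim that does not hold.
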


\begin{proof}

By Lemma \ref{lm:decomp}, if the Eq. \eqref{eq:general2} is Picard-Vessiot integrable then it corresponds to case $1$ of Kovacic algorithm. We proceed to step 1 of case $1$ obtaining a case of type 
$\{c_3,\infty_3\}$.
\begin{equation*}
[\sqrt{L}]_0=R(x),\hspace{5pt}\alpha_0^{\pm}=\frac{1}{2}\left(\pm\frac{b_{-(q+1)}}{r_{-q}}+q\right).
\end{equation*}
\begin{equation*}
[\sqrt{L}]_{\infty}=A(x),\hspace{5pt}\alpha_\infty^{\pm}=\frac{1}{2}\left(\pm b_{p-1}-p\right).
\end{equation*}
A necessary integrability condition, given in step 2 of case 1, is that some of the quantities $d$ (as defined in Eq. \eqref{eq:d_y_lambda}), corresponding to some
choice of signs $s_0 = \pm 1$ and $s_\infty = \pm 1$, is a non-negative integer. If not, the Galois group of Eq. \eqref{eq:general2} is ${\rm SL}_2(\mathbb{C})$.
Otherwise, for each suitable choice of signs we set the rational function $\phi=s_\infty A(x)+s_0 R(x)+\frac{1}{2}\frac{s_0\frac{b_{-(q+1)}}{r_{-q}}+q}{x}=\omega(x)+\frac{\lambda}{x}$ and search for a monic polynomial $P$ of degree $d$, which satisfy the auxiliary differential equation,
\begin{equation*}
    P''+2\phi P'+(\phi ' +\phi^2 -L)P=0.
\end{equation*}
This equation assumes the form of Eq. \eqref{aux3} when we substitute the decomposition given in \eqref{Q_decomp_3}.
If a pair $(\omega, P)$ described as above can be found then Eq. \eqref{eq:general2} is Picard-Vessiot integrable. In addition, Kovacic algorithm  provides us the solution given in the statement.
\end{proof}

\subsection{Characterization of $\mathbb S_{(2,m)}$ in terms of ${\mathbb B}_{(2,2m+2)}$}

Let us take differential equation \eqref{eq:general} with $L(x)\in \mathbb S_{(2,m)}$. It implies that the associated Riccati equation,
$$u = \frac{y'}{y}, \quad u' = u^2 + L(x)$$
has an algebraic solution $u(x)$ of degree $1$ (case $\mathbb B_{(2,m)}$) or $2$ (case $\mathbb D_{(2,m)}$). 
Let us assume that $L(x)\in \mathbb D_{(2,m)}$.  Now we have a look on the geometric properties of the solutions of the Riccati equation. It is well known (see \cite{de2016uniformization} Proposition VIII.1.1) that given an initial condition $(x_0,u_0)\in \mathbb C^* \times \bar{\mathbb C}$ and any simply connected open subset $\mathcal U$ of $\mathbb C^*$ there is a solution $u$ with $u(x_0) = u_0$ and meromorphic in $\mathcal U$. This implies that the solutions of the Riccati equation may have poles but not ramification points outside of the singular locus $\{0, \infty\}$. 
If we compose the algebraic solution with the ramified cover,
$$\bar{\mathbb C}\to \bar{\mathbb C}, \quad w \mapsto x = w^2$$
we obtain that the two determinations of the algebraic function $u$ split in two rational functions of $w$ with poles only in $0$ and $\infty$. In other words, the Riccati equation has a solution which is a rational function on $\sqrt{x}$.

Therefore, if we apply the change of variable $x = w^2$ 
in Eq. \eqref{eq:general} we arrive to differential equation:
\begin{equation}\label{eq:dal}
\frac{d^2y}{dw^2} - \frac{1}{w}\frac{dy}{dw} - 4w^2L(w)y = 0.
\end{equation}
whose associated Riccati equation has a Laurent polynomial solution. We may then transform the equation into 
a reduced form with the same associated Riccati equation by taking, 
$$\tilde y = \frac{y}{\sqrt{w}}$$
obtaining, 
\begin{equation}\label{eq:reducedX}
\frac{d^2\tilde y}{dw^2} = \left(\frac{3}{4w^2} + 4w^2 L(w^2)\right) \tilde y.
\end{equation}

This trace free equation has a Laurent polynomial coefficient whose leading term $w^{2m+2}$ has a coefficient $4$. We scale the independent variable
by taking $\tilde w = \sqrt[m+2]{2} w$ obtaining:
\begin{equation}\label{eq:reduced}
\frac{d^2\tilde y}{d\tilde w^2} = \left(\frac{3}{4 \tilde w^2} + \sqrt[m+2]{2^{2m}} \tilde w^2 L \left( \frac{\tilde w^2}{\sqrt[m+2]{2^2}} \right)\right) \tilde y,
\end{equation}
that can be seen as a differential equation in the family $\mathbb B_{(2,2m+2)}$. \\

\begin{lemma}\label{lm:dal1}
$$L(x)\in \mathbb S_{(2,m)}\quad \Longleftrightarrow \quad \frac{3}{4 x^2} + \sqrt[m+2]{2^{2m}} x^2 L \left( \frac{x^2}{\sqrt[m+2]{2^2}} \right) \in \mathbb B_{(2,2m+2)}$$
\end{lemma}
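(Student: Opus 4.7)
I would reduce the lemma to a statement about solutions of the associated Riccati equations. Recall that $L\in\mathbb S_{(2,m)}$ iff the Riccati $u'=u^2+L(x)$ admits an algebraic solution of degree $1$ (Borel case) or $2$ (dihedral case), while $\tfrac{3}{4x^2}+4x^2L(x^2)\in\mathbb B_{(2,2m+2)}$ iff the Riccati attached to \eqref{eq:reduced} admits a Laurent polynomial solution. The goal is to exhibit the change of variables $x=w^2$, $\tilde y = y/\sqrt w$ as a bijection between these two sets of Riccati solutions, which then transports the integrability property between the two families.

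A short computation of $\tilde u = \tilde y'/\tilde y$ starting from $\tilde y(w)=w^{-1/2}y(w^2)$ yields the correspondence
\begin{equation*}
\tilde u(w) \;=\; 2w\,u(w^2) - \frac{1}{2w}, \qquad u(x) \;=\; \frac{1}{4x} + \frac{\tilde u(\sqrt x)}{2\sqrt x},
\end{equation*}
where $u=y'/y$. For the forward direction, assume $L\in\mathbb S_{(2,m)}$. The algebraic Riccati solution $u$ has its branch points confined to the singular locus of the equation, which for $L\in\mathbb M_{(2,m)}$ is $\{0,\infty\}$. Since $w\mapsto w^2$ is the unique connected double cover of $\bar{\mathbb C}$ ramified exactly over $\{0,\infty\}$, the pullback $u(w^2)$ is rational in $w$ in both the Borel and dihedral subcases. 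The displayed formula then forces $\tilde u$ to be rational with poles only at $w=0$ and $w=\infty$, hence a Laurent polynomial, so the transformed equation lies in $\mathbb B_{(2,2m+2)}$.

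For the converse, a Laurent polynomial $\tilde u(w)$ solving the transformed Riccati produces, via $u(x)=\tfrac{1}{4x}+\tfrac{\tilde u(\sqrt x)}{2\sqrt x}\in\mathbb C(\sqrt x)$, an algebraic function of degree at most $2$ over $\mathbb C(x)$ that satisfies the original Riccati. Hence the Galois group of \eqref{eq:general} is contained either in ${\rm B}_2$ (when $u\in\mathbb C(x)$) or in ${\rm D}^\infty$ (when $u$ is genuinely of degree $2$), so $L\in\mathbb S_{(2,m)}$. The substantive step is the ramification argument in the forward direction, which rests on the standard fact that Picard--Vessiot algebraic solutions ramify only at singularities of the coefficients; I would cite Kovacic \cite{Kov86} for this. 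A minor bookkeeping point is the type of the transformed coefficient: direct expansion gives type $(2,2m+2)$ as long as $\ell_{-2}\neq -3/16$, and on the degenerate hyperplane the statement should be read with $\mathbb B_{(2,2m+2)}$ enlarged to allow lower order at $0$ (equivalently, that case reduces to Theorem~\ref{th:case2} with vanishing $b$-coefficient).
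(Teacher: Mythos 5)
Your proof follows essentially the same route as the paper: the change of variables $x=w^2$, $\tilde y=y/\sqrt w$, the observation that an algebraic Riccati solution of \eqref{eq:general} can ramify only over the singularities $0$ and $\infty$ and hence becomes rational after pulling back through the double cover, and the transport of Liouvillian solutions back through the transformation for the converse. Your explicit correspondence $\tilde u=2w\,u(w^2)-\tfrac{1}{2w}$ and the remark about the degenerate hyperplane $\ell_{-2}=-3/16$ are welcome refinements of details the paper leaves implicit; the only caveat is that the claim ``poles only at $w=0,\infty$, hence a Laurent polynomial'' is stronger than what holds (the logarithmic derivative also has simple poles at zeros of the solution), but this matches the paper's own wording and is harmless since rationality of $\tilde u$ already gives membership in $\mathbb B_{(2,2m+2)}$.
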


\begin{proof}
Let part from any $L(x)\in \mathbb M_{(2,m)}$ and let us follow the change of variables and reduction from Eq. \eqref{eq:general} to Eq. \eqref{eq:reduced}. It is clear that if the second equation has a Liouvillian solution, then we obtain a liouvillian solution of \eqref{eq:general}. But, because of the above discussion, if the second equation is Picard-Vessiot integrable, it must be in $\mathbb B_{(2,2m+2)}$.
\end{proof}

Finally let us note that the D'alembert transform and reductions process is a polynomial map in the coefficients of the involved Laurent polynomials. We define:
$$f_{m,r}\colon \mathbb M_{r,m}\hookrightarrow \mathbb M_{2(r-1),2(m+1)}, \quad L(x) \mapsto \frac{3}{4 x^2} + \sqrt[m+2]{2^{2m}} x^2 L \left( \frac{x^2}{\sqrt[m+2]{2^2}} \right)$$
$$(\ell_{-r},\ldots,\ell_{m-1})\mapsto \left(\sqrt[m+2]{2^{2r}}\ell_{-r},0,
\ldots,\frac{3}{4}+\sqrt[m+2]{2^4}\ell_{-2},0,\ldots,\frac{\ell_{m-1}}{\sqrt[m+2]{2^{2(m-1)}}},0\right).$$
It is clearly an affine embedding.

\begin{proposition}
$\mathbb S_{(2,m)} = f^{-1}_{(2,2p)}(\mathbb B_{(2,2m+2)})$.
\end{proposition}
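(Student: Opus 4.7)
The plan is to derive this Proposition as a direct reformulation of Lemma \ref{lm:dal1} in terms of the map $f_{m,2}$ introduced just above. Specializing the formula that defines $f_{m,r}$ to $r=2$ gives
$$f_{m,2}(L)(x) = \frac{3}{4x^2} + 4x^2 L(x^2),$$
which is exactly the Laurent polynomial appearing in the statement of Lemma \ref{lm:dal1}.

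From here the argument is just chaining equivalences. For any $L(x)\in\mathbb M_{(2,m)}$,
\begin{align*}
L(x)\in\mathbb S_{(2,m)} &\;\Longleftrightarrow\; \tfrac{3}{4x^2}+4x^2 L(x^2)\in\mathbb B_{(2,2m+2)} \\
&\;\Longleftrightarrow\; f_{m,2}(L)\in\mathbb B_{(2,2m+2)} \\
&\;\Longleftrightarrow\; L\in f_{m,2}^{-1}(\mathbb B_{(2,2m+2)}).
\end{align*}
The first equivalence is the content of Lemma \ref{lm:dal1}, the second is the definition of $f_{m,2}$, and the third is the definition of preimage. Reading this biconditional pointwise as an equality of subsets of $\mathbb M_{(2,m)}$ yields the displayed identity.

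The only fact that deserves a short check is that $f_{m,2}$ is a well-defined map whose target contains $\mathbb B_{(2,2m+2)}$. This is visible from the explicit coefficient list written in the paragraph preceding the Proposition: the image is monic of degree $2m+2$ at infinity, and its order at zero equals $-2$ whenever $\tfrac{3}{4}+4\ell_{-2}\neq 0$. Since the statement concerns only preimages of a subset of $\mathbb M_{(2,2m+2)}$, no injectivity or surjectivity of $f_{m,2}$ is required. I therefore expect no genuine obstacle: all the analytic content (the D'Alembert two-sheet cover argument, the reduction to normal form, and the identification of the associated Riccati solution with a Laurent polynomial in $\sqrt{x}$) has already been packaged into Lemma \ref{lm:dal1}, and the Proposition is a one-line corollary of that lemma.
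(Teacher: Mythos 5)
Your proof is correct and follows exactly the route the paper intends: the paper offers no argument beyond the remark that the Proposition ``clearly follows'' from Lemma \ref{lm:dal1}, and your chain of equivalences (Lemma \ref{lm:dal1}, then the definition of the map, then the definition of preimage) is precisely that one-line deduction spelled out. Your additional check that the image of the map lands in $\mathbb M_{(2,2m+2)}$ is a reasonable sanity check that the paper leaves implicit.
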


\begin{proof}
It is a direct consequence of Lemma \ref{lm:dal1}.
\end{proof}

\section{Asymptotic iteration method for auxiliary equations}

From Theorems \ref{th:case1}, \ref{th:case2} and \ref{th:case3} we have that in order to characterize the spectral sets ${\mathbb S}_{(1,2p)}$, ${\mathbb B}_{2,2p}$ and $\mathbb S_{(2q,2p)}$ we need to determine in which cases the auxiliary differential equations \eqref{aux1}, \eqref{aux2} and \eqref{aux3} admit a polynomial solution. 
With this purpose we use an adapted version of the asymptotic iteration method or AIM, that  was introduced by H. Ciftci \textit{et al} in \citep{CHS} and previously applied to a related problem in \cite{abv2020}.

\subsection{General considerations about the asymptotic iteration method}

Let us consider $\mathbb Q\{\ad,\bd\}$ the ring of differential polynomials in two differential indeterminates $\ad$, $\bd$. Let us consider a differential equation,
\begin{equation}\label{gen:asymptotic}
y '' = \ad y' + \bd y 
\end{equation}

We also set:
$\lambda_0(\ad,\bd) = \ad, \quad s_0(\ad,\bd) = \bd.$
By derivation of Eq. \eqref{gen:asymptotic} we obtain a sequence of differential equations,
\begin{equation}\label{miaseq}
y^{(j+2)}=\lambda_{j}(\ad,\bd)y'+s_{j}(\ad,\bd)y
\end{equation}
where $\{\lambda_j(\ad,\bd)\}_{j\in\mathbb N}$ and $\{s_j(\ad,\bd)\}_{j\in \mathbb N}$ are sequences of differential polynomials in $\ad$ and $\bd$ defined by the recurrence  (cf. (21) in \cite{abv2020}),
\begin{equation}\label{recurrence_lr}
\lambda_{j+1}=\lambda_{j}'+s_{j}+\ad \lambda_{j}, \quad s_{j+1}=s_{j}'+\bd \lambda_{j}.    
\end{equation}
and a sequence of obstructions,
$$\Delta_j(\ad,\bd) = s_j(\ad,\bd)\lambda_{j-1}(\ad,\bd) - \lambda_j(\ad,\bd)s_{j-1}(\ad,\bd).$$

Another way of looking at this sequence $\{\Delta_n\}_{n\in\mathbb N}$ is to consider the recurrence law \eqref{recurrence_lr} as
the iteration of a $\mathbb Q$-linear operator in $\mathbb Q\{\ad,\bd\}^2$. Note that:
$$\left[
\begin{array}{c} \lambda_{j+1} \\ s_{j+1} \end{array}
\right] = 
\left(
\frac{d}{dx} + 
\left[
\begin{array}{cc} \ad & 1 \\ \bd & 0 \end{array}
\right]
\right)
\left[
\begin{array}{c} \lambda_{j} \\ s_{j} \end{array}
\right]$$
it follows:

\begin{equation}\label{eq:Delta_n}
\Delta_n(\ad,\bd) = {\rm det}\left\{
\left(
\frac{d}{dx} + 
\left[
\begin{array}{cc} \ad & 1 \\ \bd & 0 \end{array}
\right]
\right)^n
\left[
\begin{array}{cc} \ad & 1 \\ \bd & 0 \end{array}
\right]\right\}
\end{equation}

This sequence, $\{\Delta_n(\ad,\bd)\}_{n\in \mathbb N}$ is a sequence of universal differential polynomials in two variables that test  whether a linear second order homogeneous linear differential equation has a polynomial solution. The following is a differential algebraic refinement of a known result ( the original result,  \cite[Theorem 2]{CHS} included some spurious hypothesis and was stated for smooth functions in an open interval, it was later translated into differential algebraic terms \cite[Theorem 4.1]{abv2020} but it was stated in a weaker form than here). In order to state it, let us consider an algebraically closed field $\mathcal C$ of characteristic zero and
a differential field extension $\left(\mathcal K,\partial\right)$ of $\left(\mathcal C(x), \frac{d}{dx}\right)$ whose field of constants is $\mathcal C$. As usual, we write $f'$ for $\partial f$. We consider a differential equation,
\begin{equation}\label{eq:mia2}
y'' = f y' + g y
\end{equation}
where $f$ and $g$ are elements of $\mathcal K$. 

\begin{theorem}\label{ThMiaPoly} 
Differential equation \eqref{eq:mia2} has a polynomial solution in $\mathcal C[x]$ of degree at most $n$ if and only if $\Delta_n(f,g) = 0$. 
\end{theorem}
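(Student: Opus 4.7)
The plan is to reduce the theorem to linear algebra via two explicit algebraic identities satisfied by any solution $y$ of Eq.~\eqref{eq:mia2}. Starting from the recurrence $y^{(j+2)}=\lambda_j(f,g)\,y'+s_j(f,g)\,y$, eliminating $y'$ and $y$ from the pair $y^{(n+1)}=\lambda_{n-1}y'+s_{n-1}y$ and $y^{(n+2)}=\lambda_n y'+s_n y$ yields
\begin{equation*}
\Delta_n(f,g)\,y \;=\; \lambda_{n-1}\,y^{(n+2)}-\lambda_n\,y^{(n+1)},
\qquad
\Delta_n(f,g)\,y' \;=\; s_n\,y^{(n+1)}-s_{n-1}\,y^{(n+2)},
\end{equation*}
on using the definition $\Delta_n=s_n\lambda_{n-1}-\lambda_n s_{n-1}$.

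For the forward implication, suppose $y\in\mathcal{C}[x]$ is a nonzero polynomial solution of degree at most $n$. Then $y^{(n+1)}=y^{(n+2)}=0$, so the first identity yields $\Delta_n(f,g)\,y=0$ in $\mathcal{K}$; since $y\neq 0$ in a field, $\Delta_n(f,g)=0$.

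For the converse, assume $\Delta_n(f,g)=0$. Fix a Picard--Vessiot extension $\mathcal{L}\supset\mathcal{K}$ for Eq.~\eqref{eq:mia2}, whose constant field is again $\mathcal{C}$, and pick a fundamental system $y_1,y_2\in\mathcal{L}$. Setting $z_i:=y_i^{(n+1)}$, the two identities become
\begin{equation*}
\lambda_{n-1}\,z_i'=\lambda_n\,z_i,\qquad s_{n-1}\,z_i'=s_n\,z_i,\qquad i=1,2.
\end{equation*}
If at least one of $\lambda_{n-1},s_{n-1}$ is nonzero in $\mathcal{K}$, then $z_1$ and $z_2$ both satisfy a nontrivial first-order linear homogeneous ODE over $\mathcal{K}$ (namely $\lambda_{n-1}z'=\lambda_n z$ if $\lambda_{n-1}\neq 0$, and $s_{n-1}z'=s_n z$ otherwise), whose solution space in any differential extension is at most one-dimensional over the constant field. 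Hence there exist $(c_1,c_2)\in\mathcal{C}^2\setminus\{(0,0)\}$ with $c_1 z_1+c_2 z_2=0$, and $y:=c_1 y_1+c_2 y_2$ is a nonzero solution of Eq.~\eqref{eq:mia2} satisfying $y^{(n+1)}=0$; a routine antiderivation in a differential field whose constants are exactly $\mathcal{C}$ then shows $y\in\mathcal{C}[x]$ with $\deg y\leq n$. In the remaining degenerate subcase $\lambda_{n-1}=s_{n-1}=0$, the recurrence $y^{(n+1)}=\lambda_{n-1}y'+s_{n-1}y$ directly forces $y^{(n+1)}=0$ for every solution, and the whole solution space already consists of polynomials of degree at most $n$.

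The main point requiring attention is the converse, and specifically the degenerate case $\lambda_{n-1}=s_{n-1}=0$: this is exactly why we keep both identities rather than only one, since either can become trivial on its own. The remaining ingredients—existence of a Picard--Vessiot extension preserving the constants, and the descent from $\mathcal{L}$ to $\mathcal{C}[x]$ once $y^{(n+1)}=0$—are standard.
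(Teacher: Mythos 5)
Your proof is correct and follows essentially the same route as the paper: the forward direction is the vanishing of the determinant of the $2\times 2$ system in $(y',y)$, and the converse splits into the degenerate case $\lambda_{n-1}(f,g)=s_{n-1}(f,g)=0$ and the case where the two rows are proportional, producing a nonzero solution with $y^{(n+1)}=0$ which must then be a polynomial of degree at most $n$ because the extension has no new constants. The only (harmless) difference is the final dimension count: you observe that $y\mapsto y^{(n+1)}$ maps the $2$-dimensional solution space into the at most $1$-dimensional solution space of a first-order equation over $\mathcal K$, whereas the paper intersects the $2$-dimensional solution space with the hyperplane of polynomials inside the $(n+2)$-dimensional solution space of $y^{(n+2)}=hy^{(n+1)}$; both arguments hinge on the same proportionality factor $h=\lambda_n/\lambda_{n-1}$ (or $s_n/s_{n-1}$) supplied by $\Delta_n(f,g)=0$.
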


\begin{proof}
For the first part of the proof we follow the same argument that \cite[Theorem 2]{CHS}.
Let us assume that Eq. \eqref{eq:mia2} admits a polynomial solution $P(x)\neq 0$ of degree at most $n$. Note that from derivation of of Eq. \eqref{eq:mia2} we obtain equations with coefficients in $\mathcal K$,
\begin{equation}\label{eq:casos_n}
\begin{cases}
y^{n+1} &= \lambda_{n-1}(f,g)y' + s_{n-1}(f,g)y\\
y^{n+2} &= \lambda_n(f,g)y' + s_n(f,g)y.
\end{cases}
\end{equation}
By taking $y = P(x)$ we obtain:
$$\left[
\begin{array}{cc} \lambda_{n-1}(f,g) & s_{n-1}(f,g) \\ \lambda_{n}(f,g) & s_{n}(f,g) \end{array}
\right]\left[
\begin{array}{c} P(x)  \\ P'(x) \end{array}
\right] = \left[
\begin{array}{c} 0  \\ 0 \end{array}
\right],$$
if $P(x)\neq 0$ then we have
$$\Delta_n(f,g) = {\rm det}\left[
\begin{array}{cc} \lambda_{n-1}(f,g) & s_{n-1}(f,g) \\ \lambda_{n}(f,g) & s_{n}(f,g) \end{array}
\right] = 0.$$

Now, let us assume $\Delta_n(f,g) = 0$. The key point of the proof is that any homogeneous linear differential equation with coefficients in $\mathcal K$ of order $r$ has a vector space of solutions of dimension $r$ in a suitable extension of $\mathcal K$. We have two different possibilities:
\begin{itemize}
\item[(a)] If $\lambda_{n-1}(f,g)=0$ and $s_{n-1}(f,g) = 0$ then Eq. \eqref{eq:casos_n} yield $y^{n+1} = 0$. This implies that solutions of Eq. \eqref{eq:general} are polynomials of degree at most $n$
\item[(b)] In any other case then there is a unique $h\in \mathcal K$ such that
$h\lambda_{n-1}(f,g) = \lambda_n(f,y)$ and $hs_{n-1}(f,g) = s_n(f,g)$. By substitution in Eq. \eqref{eq:casos_n} we obtain
$$y^{(n+2)} = hy^{n+1}.$$
The general solution of this equation is $(n+2)$-vector space over $\mathcal C$ containing the space of all polynomials of degree at most $n$ as an hyperplane. The space of solutions of Eq. \eqref{eq:mia2} is a $2$-subspace of such $(n+2)$-vector space. It must intersect the hyperplane of polynomials at least along a line. Thus, it contains a polynomial of degree at most $n$.
\end{itemize}

\end{proof}

\begin{table}[ht]
    \centering
    \caption{First universal differential polynomials $\Delta_d$}
    \begin{tabular}{c c}
\hline
    $\Delta_0$ & $-\bd$\\
    $\Delta_1$ & $\ad\bd' - \ad'\bd-\bd^2$ \\
    $\Delta_2$ & $\begin{aligned}&-\ad ''\ad\bd+\ad^2\bd ''-2\ad\ad'\bd'-3\ad\bd'\bd+2(\ad')^2\bd+3\ad'\bd^2+\bd^3-\ad''\bd'+\ad'\bd''\\
    &-2(\bd')^2+\bd\bd''\end{aligned}$\\
    $\Delta_3$&
    $\begin{aligned}
           &3\ad'\ad\bd'''+3\ad'\ad''\bd'-10\ad'\bd''\bd-3\ad\ad''\bd''-6\ad\bd''\bd'+2\ad\bd\bd'''+10\ad''\bd\bd'\\
           &-\ad'''\ad^2\bd-3\ad'\ad^2\bd''-3\ad^2\ad''\bd'-4\ad^2\bd''\bd+4\ad\ad''\bd^2-3(\bd'')^2-\bd^4+\ad^3\bd'''\\
           &-\ad'''\bd^2-3(\ad')^2\bd''+3(\ad'')^2\bd-4\bd''\bd^2-\ad'''\bd''+\ad''\bd'''+2\bd'\bd'''-6(\ad')^3\bd\\
           &-11(\ad')^2\bd^2-6\ad'\bd^3-3\ad^3(\bd')^2+6\ad'(\bd')^2+8\bd(\bd')^2-2\ad'''\ad'\bd-\ad'''\ad\bd'\\
           &+6(\ad')^2\ad\bd'+6\ad\bd^2\bd'+6\ad'\ad\ad''\bd+14\ad'\ad\bd\bd'
    \end{aligned}$
           \end{tabular}
    \label{useries}
\end{table}

\subsection{Asymptotic iteration method for auxiliary equations \eqref{aux1}, \eqref{aux2} and \eqref{aux3} }

Here we complete the integrability analysis of equation \eqref{eq:general} by applying
the asymptotic iteration method to auxiliary equations. 
We need to introduce the spectral varieties, some subsets of the spectral set that turn out to be algebraic varieties.

\begin{definition}
Let us call $\mathbb S_{(1,2p)}^{(d \pm)}$ to the set of $L(x)\in \mathbb M_{(1,2p)}$ such that the corresponding auxiliary equation \eqref{aux1} with $s_{\infty} = \pm 1$ has a polynomial solution of degree $d$. Let us also set,
$$\mathbb S_{(1,2p)}^{(d)} = \mathbb S_{(1,d)}^{(d+)} \cup \mathbb S_{(1,d)}^{(d-)}.$$
\end{definition}

\begin{definition}
Let us call $\mathbb B_{(2,2p)}^{(d \pm)}$ to the set of $L(x)\in \mathbb M_{(2,2p)}$ such that the corresponding auxiliary equation \eqref{aux2} with $s_{\infty} = \pm 1$ and some choice of $s_0$ has a polynomial solution of degree $d$. Let us also set,
$$\mathbb B_{(2,2p)}^{(d)} = \mathbb B_{(2,p)}^{(d+)} \cup \mathbb B_{(2,2p)}^{(d-)}.$$
\end{definition}

\begin{definition}
Let us set $\mathbb S_{(2,m)}^{(d\pm)} = f_{(2,m)}^{-1}(\mathbb B_{(2,2m+2)}^{(d\pm)})$ and 
$\mathbb S_{(2,m)}^{(d)} = f_{(2,m)}^{-1}(\mathbb B_{(2,2m+2)}^{(d)})$.
\end{definition}

\begin{definition}
Let us call $\tilde{\mathbb S}_{(2q,2p)}^{(d \pm \pm)}$ to the set of $(R(x),B(x),A(x))\in \tilde{\mathbb M}_{(2q,2p)}$ such that the corresponding auxiliary equation \eqref{aux3} with $s_{\infty} = \pm 1$, $s_0 = \pm 1$ has a polynomial solution of degree $d$. Let us also set,
$$\tilde{\mathbb S}_{(2q,2p)}^{(d)} = \tilde{\mathbb S}_{(2q,2p)}^{(d++)} \cup \tilde{\mathbb S}_{(2q,2p)}^{(d+-)} \cup \tilde{\mathbb S}_{(2q,2p)}^{(d-+)} \cup \tilde{\mathbb S}_{(2q,2p)}^{(d--)}.$$
\end{definition}

\begin{definition}
Let us call $\mathbb S_{(2q,2p)}^{(d\pm\pm)} = \pi_{(2q,2p)}^{-1}(\tilde{\mathbb S}^{(d\pm\pm)}_{(2q,2p)} )$ and $\mathbb S_{(2q,2p)}^{(d)} = \pi_{(2q,2p)}^{-1}(\tilde{\mathbb S}^{(d)}_{(2q,2p)})$.
\end{definition}

By the above definitions we have that, for any $(r,m)\in \mathbb Z_+^2$ the spectral set decomposes as enumerable union of strata,
$${\mathbb S}_{(r,m)} = \bigcup_{d\geq 0} {\mathbb S}_{(r,m)}^{(d)}.$$
By application of the asymptotic iteration method we will see that these strata ${\mathbb S}_{(r,m)}^{(d)}$ are algebraic varieties, that we call \emph{spectral varieties}, and compute their equations.

\begin{proposition}\label{th:aim1}
$L(x) \in \mathbb S_{(1,2p)}^{(d \pm)}$ with $s_\infty = \pm 1$ if and only if,
\begin{itemize}
    \item[(a)] $\pm b_{p-1} = 2d + p + 2$.
    \item[(b)] $\Delta_d\left(-s_\infty A(x) - \frac{1}{x}, -s_\infty A'(x)+B(x)+\frac{2s_\infty A(x)-a}{x}  \right)  \in \mathbb C[x,x^{-1}]$ vanishes. 
\end{itemize}
\end{proposition}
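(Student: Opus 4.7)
The proof is a clean assembly of Theorem \ref{th:case1} and Theorem \ref{ThMiaPoly}. By Theorem \ref{th:case1}, $L(x)\in \mathbb S_{(1,2p)}^{(d\pm)}$ precisely when the integer $d = (s_\infty b_{p-1}-p-2)/2$ is a non-negative integer and the auxiliary equation \eqref{aux1} has a polynomial solution of degree $d$. The numerical part is exactly condition (a), so the task reduces to translating the existence of a polynomial solution of degree $d$ into condition (b).

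The next step is to rewrite \eqref{aux1} in the normal form $P'' = \alpha P' + \beta P$ used in Theorem \ref{ThMiaPoly} and read off the coefficients $\alpha$ and $\beta$ directly from \eqref{aux1}. Applying Theorem \ref{ThMiaPoly} with $n = d$ then identifies the vanishing of $\Delta_d(\alpha,\beta)$ with the existence of a non-zero polynomial solution of \eqref{aux1} of degree at most $d$.

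What remains is to upgrade \emph{at most $d$} to \emph{exactly $d$} under hypothesis (a). For this I would perform a leading-order analysis at infinity: substituting a monic ansatz $P(x) = x^{d'}+\cdots$ into \eqref{aux1} and collecting the dominant contributions of order $x^{p+d'-1}$, which arise from $2 s_\infty A\, P'$, $s_\infty A'\, P$, $-B\, P$ and the $(2 s_\infty A/x)\, P$ piece of the pole term, yields the indicial balance $s_\infty b_{p-1} = 2d' + p + 2$. Combined with (a), this forces $d' = d$, so every non-zero polynomial solution of \eqref{aux1} has degree exactly $d$. The main point of care is this last bookkeeping: one must remember that $(2 s_\infty A/x)\, P$ contributes at the same top order $x^{p+d'-1}$ as $2 s_\infty A\, P'$, since $\deg A = p\ge 1$. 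Once this is checked, conditions (a) and (b) together are equivalent to the hypotheses of Theorem \ref{th:case1}, proving the proposition.
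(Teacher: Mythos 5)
Your proposal is correct and follows essentially the same route as the paper: it combines Theorem \ref{th:case1} with Theorem \ref{ThMiaPoly} applied to \eqref{aux1}, using condition (a) to rule out polynomial solutions of degree strictly smaller than $d$. Your explicit leading-order balance $s_\infty b_{p-1}=2d'+p+2$ is precisely the justification the paper compresses into its one-sentence remark that condition (a) forces the degree of the polynomial solution to be exactly $d$.
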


\begin{proof}
It follows automatically from Theorem \ref{th:case1} by  application of Theorem \eqref{ThMiaPoly} to \eqref{aux1}.
Note that condition (a) implies that the degree of the polynomial solution of \eqref{aux1} is not strictly smaller than $d$.
\end{proof}

\begin{remark}
Note that given a choice of the sign $s_{\infty}$ then
$$\Delta_d\left(-s_\infty A(x) - \frac{1}{x}, -s_\infty A'(x)+B(x)+\frac{2s_\infty A(x)-a}{x}  \right) \in \mathbb Q[\ell_{-1},\ldots,\ell_{2p},x,x^{-1}].$$
Therefore, the coefficients of $\Delta_d\left(-s_\infty A(x) - \frac{1}{x}, -s_\infty A'(x)+B(x)+s_\infty\frac{2A(x)-a}{x}  \right)$ are regular functions in
$\mathbb M_{(1,2p)}$ and, together with equation $\pm b_{p-1} = 2d + p + 2$, determine the spectral variety 
$\mathbb S_{(1,2p)}^{(d\pm)}$.
\end{remark}

Now, we apply the asymptotic iteration method to Eq. \eqref{aux2} in order to give necessary and sufficient conditions for the existence of a polynomial solution of degree $d$. Given $L(x) \in \mathbb M_{(2,2p)}$ we consider its decomposition as in Eq. \eqref{Q_decomp_2}.

\begin{proposition}\label{th:aim2}
$L(x) \in \mathbb B_{(2,2p)}^{(d \pm)}$, with $s_\infty = \pm 1$, if and only if,
\begin{itemize}
    \item[(a)] $s_\infty b_{p-1} = 2d + s_0\sqrt{1+4b} + p + 1$ (this determines the choice of the square root, that is, the sign $s_0$, and therefore $\lambda$).
    \item[(b)] $\Delta_d\left(-s_\infty A(x) - \frac{\lambda}{x}, -s_\infty A'(x)+B(x)-\frac{2s_\infty\lambda A(x)-a}{x}  \right)  \in \mathbb C[x,x^{-1}]$ vanishes. 
\end{itemize}
\end{proposition}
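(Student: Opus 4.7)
The plan is to mirror the one-line argument used for Proposition \ref{th:aim1}: apply Theorem \ref{ThMiaPoly} to the auxiliary equation \eqref{aux2} in the setting prepared by Theorem \ref{th:case2}. By definition, $\mathbb B_{(2,2p)}^{(d\pm)}$ is the locus of $L(x)\in\mathbb M_{(2,2p)}$ for which \eqref{aux2}, with the prescribed $s_\infty=\pm 1$ and \emph{some} choice of $s_0=\pm 1$, admits a polynomial solution of degree $d$. Theorem \ref{th:case2} turns this into two conditions: the integrality/positivity of $d=(s_\infty b_{p-1}-s_0\sqrt{1+4b}-p-1)/2$, and the solvability of \eqref{aux2} in $\mathbb C[x]$.

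Next I rearrange the formula for $d$ into $s_\infty b_{p-1}=2d+s_0\sqrt{1+4b}+p+1$, which is precisely (a). The key observation is that for fixed $d$, $s_\infty$, $b$, and $b_{p-1}$, this identity uniquely determines $s_0$ (hence $\lambda=(1+s_0\sqrt{1+4b})/2$), modulo the harmless degenerate value $b=-\tfrac14$ where both signs coincide. Thus condition (a) fully fixes the auxiliary equation \eqref{aux2}, and the remaining task is to test whether it carries a polynomial solution of degree exactly $d$.

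For that I rewrite \eqref{aux2} in the AIM normal form $P''=\alpha P'+\beta P$ by isolating $P''$, using the very same bookkeeping convention introduced in Proposition \ref{th:aim1} and encoded in Table \ref{useries}. Theorem \ref{ThMiaPoly} then states that a polynomial solution of degree at most $d$ exists if and only if $\Delta_d(\alpha,\beta)$ vanishes identically as an element of $\mathbb C[x,x^{-1}]$, which after substitution is exactly the expression displayed in (b). The ``at most $d$'' versus ``exactly $d$'' gap is closed by (a): the Kovacic exponent $(s_\infty b_{p-1}-p)/2$ read off at infinity in the proof of Theorem \ref{th:case2}, minus the contribution $\lambda$ pulled out at $0$, equals $d$, which forbids a polynomial solution of strictly smaller degree.

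The only real obstacle is purely notational: making sure that the arguments $\bigl(-s_\infty A(x)-\lambda/x,\,-s_\infty A'(x)+B(x)-(2s_\infty\lambda A(x)-a)/x\bigr)$ appearing in (b) are the ones produced by the rewriting of \eqref{aux2} under the convention fixed in Theorem \ref{ThMiaPoly} and employed in Proposition \ref{th:aim1}. Once this match is recorded, the equivalence is immediate and no additional analysis is required; the vanishing of $\Delta_d$ is an algebraic condition on the coefficients of $L(x)$ (via $A$, $B$, $a$, $b$) which, together with (a), cuts out the spectral variety $\mathbb B_{(2,2p)}^{(d\pm)}$ inside $\mathbb M_{(2,2p)}$.
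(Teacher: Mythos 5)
Your proposal is correct and follows essentially the same route as the paper: Theorem \ref{th:case2} reduces membership in $\mathbb B_{(2,2p)}^{(d\pm)}$ to the integrality condition on $d$ (which rearranges to (a) and fixes $s_0$ and $\lambda$) plus the existence of a degree-$d$ polynomial solution of \eqref{aux2}, and Theorem \ref{ThMiaPoly} converts the latter into the vanishing of $\Delta_d$ in (b), with (a) ruling out solutions of degree strictly less than $d$. The paper's own proof is exactly this two-line application of Theorems \ref{th:case2} and \ref{ThMiaPoly}, so no further commentary is needed.
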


\begin{proof}
It follows automatically from Theorem \ref{th:case2} by  application of Theorem \ref{ThMiaPoly} to Eq. \eqref{aux2}.
Note that condition (a) implies that the degree of the polynomial solution of Eq. \eqref{aux1} is not strictly smaller than $d$.
\end{proof}

\begin{remark}
Note that given a choice of the sign $s_{\infty}$ then
$$\Delta_d\left(-s_\infty A(x) - \frac{\lambda}{x}, -s_\infty A'(x)+B(x)-\frac{2s_\infty\lambda A(x)-a}{x}  \right) \in \mathbb Q[\ell_{-1},\ldots,\ell_{2p},x,x^{-1}].$$
Therefore, the coefficients of $\Delta_d\left(-s_\infty A(x) - \frac{\lambda}{x}, -s_\infty A'(x)+B(x)-\frac{2s_\infty\lambda A(x)-a}{x}  \right)$ are regular functions in
$\mathbb M_{(1,2p)}$ and, together with equation $$(s_\infty b_{p-1}-2d-p-1)^2 = 1+4b,$$ determine the spectral variety 
$\mathbb S_{(1,2p)}^{(d\pm)}$.
\end{remark}

Finally, we apply the asymptotic iteration method to Eq. \eqref{aux3}. Given $(R(x),B(x),A(x)) \in \tilde{\mathbb M}_{(2q,2p)}$ and a choice of the signs $s_\infty = \pm1$ and $s_0 = \pm 1$ in Eq. \eqref{aux3} let us recall:
$$\lambda = \frac{s_0b_{-(q+1)}}{2r_q} + \frac{q}{2} = \frac{s_\infty b_{p-1} - p}{2} - d,\quad \omega(x) = s_\infty A(x) + s_0R(x).$$

\begin{proposition}\label{th:aim3}
$(R(x),B(x),A(x)) \in \tilde{\mathbb S}_{(2q,2p)}^{(d\pm\pm)}$ if and only if 
\begin{itemize}
    \item[(a)] $d = \frac{s_\infty b_{p-1} -p - q}{2} - \frac{s_0b_{-(q+1)}}{2r_{-q}}.$
    \item[(b)] $\Delta_d\left(-\omega(x) - \frac{\lambda}{x}, 
-\omega'(x) + B(x) - 
\frac{\lambda(2\omega(x) + 
\lambda-1)}{x} \right)\in \mathbb C[x,x^{-1}]$
vanishes.
\end{itemize}
\end{proposition}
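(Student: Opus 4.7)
The plan is to follow the template established in Propositions \ref{th:aim1} and \ref{th:aim2}: combine the Kovacic characterization of Theorem \ref{th:case3} (which reduces membership in $\tilde{\mathbb S}_{(2q,2p)}^{(d\pm\pm)}$ to the existence of a monic polynomial solution of degree exactly $d$ for the auxiliary equation \eqref{aux3}) with Theorem \ref{ThMiaPoly} (which turns polynomial solvability of bounded degree into the vanishing of a single universal differential polynomial $\Delta_d$).

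Concretely, I would first rewrite \eqref{aux3} in the normal form \eqref{eq:mia2} of Theorem \ref{ThMiaPoly}, that is $P''=fP'+gP$, reading off
\begin{equation*}
f(x) \;=\; -2\omega(x) - \frac{2\lambda}{x}, \qquad g(x) \;=\; -\omega'(x) + B(x) - \frac{\lambda\bigl(2\omega(x) + \lambda - 1\bigr)}{x}.
\end{equation*}
Both $f$ and $g$ lie in the differential field $\mathbb C(x)$, whose field of constants is $\mathbb C$, so Theorem \ref{ThMiaPoly} applies and states that \eqref{aux3} admits a nonzero polynomial solution in $\mathbb C[x]$ of degree at most $d$ if and only if $\Delta_d(f,g)=0$. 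Up to the same presentation convention already used in Propositions \ref{th:aim1} and \ref{th:aim2} to display the universal polynomial (where the first slot of $\Delta_d$ is written without the overall factor of $2$), this is exactly condition (b). Condition (a) is then simply the indicial identity among $d$, $s_\infty$, $s_0$ and the coefficients of $L(x)$ that was already set up just before Theorem \ref{th:case3}, so once (a) holds we are automatically in the regime covered by that theorem.

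The only point that does not follow automatically is excluding a polynomial solution of degree strictly smaller than $d$, since $\Delta_d(f,g)=0$ only detects degree at most $d$. As in the remarks following Propositions \ref{th:aim1} and \ref{th:aim2}, the argument is that the exponents $\alpha_0^{s_0}$ and $\alpha_\infty^{s_\infty}$ selected by the sign choices force the degree of any polynomial solution of \eqref{aux3} to equal $d$ whenever (a) holds: a polynomial of degree $d'<d$ would violate the leading-order balance of \eqref{aux3} at $x=\infty$ (which, through the Kovacic data, pins down the top degree) or the indicial relation at $x=0$ (which pins down $\lambda$). I expect this leading-term bookkeeping, rather than the invocation of Theorem \ref{ThMiaPoly} itself, to be the main verification step, though it introduces no essentially new ideas compared with the two previous propositions.
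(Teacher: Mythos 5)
Your proposal is correct and follows exactly the route the paper takes: Theorem \ref{th:case3} reduces membership in $\tilde{\mathbb S}_{(2q,2p)}^{(d\pm\pm)}$ to polynomial solvability of \eqref{aux3}, Theorem \ref{ThMiaPoly} converts that into the vanishing of $\Delta_d$, and condition (a) rules out solutions of degree strictly less than $d$. The paper's own proof is just a terser version of the same argument (it even leaves the leading-term bookkeeping you flag as an unproved remark), so there is nothing to add.
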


\begin{proof}
It follows automatically from Theorem \ref{th:case3} by  application of Theorem \ref{ThMiaPoly} to Eq. \eqref{aux2}.
Note that condition (a) implies that the degree of the polynomial solution of Eq. \eqref{aux1} is not strictly smaller than $d$.
\end{proof}

\begin{remark}
Note that given a choice of the signs $s_0$ and $s_{\infty}$ then
$$\Delta_d\left(\omega(x) + \frac{\lambda}{x}, 
\omega'(x) - B(x) + 
\frac{\lambda(2\omega(x) + 
\lambda-1)}{x} \right) \in \mathbb Q[r_{j}^{-1},b_{j},a_j,r_{-q}^{-1},x,x^{-1}].$$
The coefficients of $\Delta_d(\omega(x),B(x),\lambda)$ as a Laurent polynomial in $x$ are regular functions in
$\tilde{\mathbb M}_{(2q,2p)}$ and its zero locus is the spectral variety $\tilde{\mathbb S}_{(2p,2q)}^{(d\pm\pm)}$.
\end{remark}

\begin{remark}
The projection $\pi_{(2q,2p)}(\tilde{\mathbb S}_{(2q,2p)}^{(d\pm\pm)}) = {\mathbb S}_{(2q,2p)}^{(d\pm\pm)}$ is then an algebraic subvariety of $\mathbb M_{(2p,2q)}$. 
\end{remark}

\subsection{Decomposition of the spectral set}

\begin{theorem}\label{th:main}
For any $(r,m)\in \mathbb Z_{>0}^2$ we have a decomposition of the spectral set
$${\mathbb S}_{(r,m)} = \bigcup_{d\geq 0} {\mathbb S}_{(r,m)}^{(d)}$$
as a enumerable union of spectral varieties. Moreover:
\begin{enumerate}
    \item[(a)] If $L(x)\in \mathbb S_{(r,m)}^{(d)}$ with $r\neq 2$ then the differential equation \eqref{eq:general} has a solution of the form:
$$y(x) = x^{\lambda}P(x)e^{\int \omega(x)dx}$$
where $P(x)$ is a monic polynomial of degree $d$, and $\omega(x)$ is a Laurent polynomial. 
   \item[(b)] If $L(x)\in \mathbb S_{(r,m)}^{(d)}$ with $r=2$ then the differential equation \eqref{eq:general} has a solution of the form:
$$y(x) = x^{\lambda}P(\sqrt{x})e^{\int \omega(\sqrt{x})dx}$$
where $P(x)$ is a monic polynomial of degree $d$, and $\omega(x)$ is a Laurent polynomial.
\end{enumerate}
\end{theorem}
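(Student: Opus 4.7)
The plan is to assemble the theorem from the case analyses already carried out. First, the decomposition $\mathbb{S}_{(r,m)} = \bigcup_{d\geq 0}\mathbb{S}_{(r,m)}^{(d)}$ is essentially built in by the definitions of the strata given in the previous subsection, together with the remark following Lemma \ref{lm:decomp} and Lemma \ref{lm:dal1}. That each stratum is an algebraic subvariety of $\mathbb{M}_{(r,m)}$ (or of the two-sheet cover $\tilde{\mathbb{M}}_{(2q,2p)}$, pushed down through $\pi_{(2q,2p)}$ when $r=2q\geq 4$) follows from Propositions \ref{th:aim1}, \ref{th:aim2} and \ref{th:aim3}: in each case the stratum is cut out by an integrality condition on the indicial exponent together with the vanishing of the coefficients of a single Laurent polynomial $\Delta_d$, whose coefficients are themselves polynomials in the parameters of $L$. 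For $(r,m)\in\mathcal{C}_4$ the statement is vacuous by Lemma \ref{lm:decomp}(d), so the content lies in exhibiting a Liouvillian solution of the stated shape in the three remaining classes.

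Next, for $(r,m) = (1,2p)\in\mathcal{C}_1$, Theorem \ref{th:case1} gives directly $y(x) = xP(x)e^{\int s_\infty A(x)dx}$ with $\deg P = d$, which is of the form (a) with $\lambda = 1$ and $\omega(x) = s_\infty A(x)$. For $(r,m) = (2q,2p)\in \mathcal{C}_1$ with $q\geq 2$, I would fix a preimage $(R,B,A)\in \tilde{\mathbb{M}}_{(2q,2p)}$ of $L(x)$ and apply Theorem \ref{th:case3} to obtain $y(x) = x^\lambda P(x) e^{\int \omega(x) dx}$ with $\omega(x) = s_\infty A(x) + s_0 R(x)$ a Laurent polynomial and $\deg P = d$, again of the form (a).

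The remaining case is $r = 2$ (covering $\mathcal{C}_2$ and $\mathcal{C}_3$), where the strata are defined by pullback under the D'Alembert map $f_{(2,m)}\colon L(x)\mapsto \tilde L(w) = \tfrac{3}{4w^2}+4w^2 L(w^2)$, so $L\in\mathbb{S}_{(2,m)}^{(d)}$ means $\tilde L\in \mathbb{B}_{(2,2m+2)}^{(d)}$. Applying Theorem \ref{th:case2} to $\tilde L$ produces a solution $\tilde y(w) = w^{\tilde\lambda}P(w) e^{\int s_\infty A(w)dw}$ of the reduced equation \eqref{eq:reduced}, with $P$ monic of degree $d$. Undoing the reduction $\tilde y = y/\sqrt{w}$ and the change of variable $x = w^2$ as in the proof of Lemma \ref{lm:dal1} gives
\[
y(x) = x^{\frac{1}{4}+\frac{\tilde\lambda}{2}}\, P(\sqrt{x})\,\exp\!\left(\int s_\infty A(\sqrt{x})\,d\sqrt{x}\right) = x^{\lambda}\, P(\sqrt{x})\, e^{\int \omega(\sqrt{x})\,dx},
\]
where $\lambda = \tfrac{1}{4}+\tfrac{\tilde\lambda}{2}$ and $\omega(u) = s_\infty A(u)/(2u)$, a Laurent polynomial in $u$ because $A(u)$ is a polynomial. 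This is precisely the form required in (b).

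The only mildly delicate point is this last change of variable; one has to check via $dw = \tfrac{1}{2\sqrt{x}}dx$ that $\int s_\infty A(w)dw$ pulls back to the exponential of an antiderivative of a Laurent polynomial in $\sqrt{x}$, which is routine. Everything else is a direct assembly of Theorems \ref{th:case1}, \ref{th:case2}, \ref{th:case3} and Lemma \ref{lm:dal1}, with no new case analysis or estimates required.
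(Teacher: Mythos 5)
Your proposal is correct and follows essentially the same route as the paper: the decomposition comes from the definitions of the strata, part (a) is read off from Theorem \ref{th:case1} (resp.\ Theorem \ref{th:case3} together with the projection $\pi_{(2q,2p)}$) via Propositions \ref{th:aim1} and \ref{th:aim3}, and part (b) follows from Proposition \ref{th:aim2} pulled back through the D'Alembert transform of Lemma \ref{lm:dal1}. Your explicit verification that the change of variables $x=w^2$ turns $e^{\int s_\infty A(w)\,dw}$ into $e^{\int \omega(\sqrt{x})\,dx}$ with $\omega(u)=s_\infty A(u)/(2u)$ a Laurent polynomial is a detail the paper leaves implicit, and it is worth having.
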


\begin{proof}
Case (a) with $r=1$ and even $m$ follows directly from Proposition \ref{th:aim1}. Case (a) with $r>2$ and even $m$ follows directly from Proposition \ref{th:aim3} and the fact that $\tilde{\mathbb M}_{(r,m)}$ is a $2$-covering space of
$\mathbb M_{(r,m)}$ and the algebraic varieties $\tilde{\mathbb S}_{(r,m)}^{\pm\pm}$ projects onto
algebraic varieties ${\mathbb S}_{(r,m)}^{\pm\pm}$. Finally, case (a) with odd $m$ is trivial as the spectral set is empty. Finally, case $(b)$ for $(2,m)$ follows from \ref{th:aim2} applied to $\mathbb M_{(2,2m+2)}$ and then D'Alembert transform.
\end{proof}

\section{Applications}\label{s:examples}

First, we consider biconfluent Heun and doubly confluent confluent differential equations, whose solutions are 
relevant special functions, see \cite{ron95}. 
Then, we analyze one dimensional stationary Schr\"odinger equations with potentials in $\mathbb{C}[x,x^{-1}]$, $\mathbb{C}[e^{\mu x},e^{-\mu x}]$ and $\mathbb{C}[\sqrt{x},\sqrt{x^{-1}}]$, which are relevant in mathematical physics and that can be written in terms of biconfluent and doubly confluent Heun differential equations. Our approach recovers and extends the results of Duval et. al. in \cite{dulo92} about these equations, which in their normal form are particular cases of the following differential equation:
\begin{equation}\label{eq:grh}
   y''-\left(a_2x^2+a_1x+a_0+\frac{a_{-1}}{x}+\frac{a_{-2}}{x^2}+\cdots \frac{a_{-2k}}{x^{2k}}\right)y=0. 
\end{equation}

\subsection{Biconfluent Heun equation}
Consider the reduced form of biconfluent Heun equation, which is given by
\begin{equation}\label{eq:bhr}
y''-\left(\left(x +\frac{\beta}{2}\right)^2 -\gamma+\frac{\delta}{2x} + \frac{\alpha^2-1}{4x^2}\right)y=0
  \end{equation}
We see that Eq. \eqref{eq:grh} corresponds to Eq. \eqref{eq:bhr} when $k=1$, $a_2=1$, $a_1=\beta$,  $a_0=\frac{\beta^2}{4}-\gamma$, $a_{-1}=\frac{\delta}{2}$ and $a_{-2}=\frac{\alpha^2-1}{4}$. A galoisian analysis of Eq. \eqref{eq:bhr} was developed by Duval et al. in \cite{dulo92}, where the authors obtained  some description of the spectral variety by treating the auxiliary equation by the method of undetermined coefficients. Using our approach we apply Theorem \ref{th:aim3}, after a coefficient decomposition as presented in Eq. \eqref{Q_decomp_3}. 

Through our approach for Eq. \eqref{eq:bhr}, the AIM method help us to obtain in an explicit way the vanishing condition of the determinant provided in \cite{dulo92}, which is equivalent to the vanishing of the universal polynomial $\Delta_d$. We provide a criteria for the existence of the solution and we use $d=1,2$ to illustrate the method. Solutions are given in pairs and the Galois group is diagonalizable. That is, we obtain the same universal differential polynomial for cases $s_\infty=1$, $s_0=1$  and $s_\infty=1$, $s_0=-1$ as follows.

For $d=1$:
$$\frac{1}{4}\alpha^{2}\beta^{2}\pm 
\alpha \beta^{2}\mp\frac{1}{2}\alpha\beta\delta\pm 2
\alpha +\frac{3}{4}{\beta}^{2}-\beta\delta+\frac{1}{4}{\delta}
^{2}+2$$

For $d=2$:

$${\frac {9\beta{\delta}^{2}}{8}}-{\frac {23{\beta}^{2}\delta}{8}}
-\frac{3}{8}\alpha^{2}\beta^{2}\delta+4\alpha^{2}\beta\mp\frac{9}{4} 
\alpha\beta^{2}\delta\pm\frac{3}{8}\alpha\beta\delta^{2}\mp 4\alpha \delta$$ $$\pm\frac{1}{8} 
\alpha^{3}\beta^{3}+{\frac {9
\alpha^{2}\beta^{3}}{8}}\pm{\frac {23\alpha\beta^{3}}{8}}-\frac{1}{8}{\delta}^{3}+{\frac {15{\beta}^
{3}}{8}}\pm 18\alpha\beta+14\beta-6\delta$$
In a similar way, we obtain the same universal differential polynomials for cases $s_\infty=-1$, $s_0=-1$  and $s_\infty=-1$, $s_0=1$ as follows.

For $d=1$:
$$\frac{1}{4}\alpha^{2}\beta^{2}\mp \alpha\beta^{2}\mp\frac{1}{2}\alpha\beta\delta\pm 2
\alpha +\frac{3}{4}\beta^{2}+\beta\delta+\frac{1}{4}\delta^{2}-2$$

For $d=2$:
$$-{\frac {9\,\beta\,{\delta}^{2}}{8}}-{\frac {23\,{\beta}^{2}\delta}{8}
}+6\,\delta+14\,\beta-\frac{3}{8}\,  \alpha^{2
}{\beta}^{2}\delta+4\, \alpha  ^{2}
\beta\pm\frac{9}{4}\, \alpha  {\beta}^{2}\delta\pm\frac{3}{8}\, 
\alpha  \beta\,{\delta}^{2}$$
$$\mp 18\,\beta\,  \alpha  \mp
4\, \alpha  \delta-{\frac {15\,{\beta}^{3}}{8}}-\frac{1}{8}\,{
\delta}^{3}\mp\frac{1}{8}\, \alpha ^{3}{\beta}^{
3}-{\frac {9\, \alpha^{2}{\beta}^{3}
}{8}}\pm{\frac {23\, \alpha  {\beta}^{3}}{8}}
$$
\subsubsection{The case $x^2+\frac{\beta}{x}$}
Setting  $\gamma=\frac{\beta^2}{4}$, $\alpha=\pm 1$ and $\delta=0$ in Eq. \eqref{eq:bhr} we have:
\begin{equation}\label{eq:solo_beta}
y''-\left(x^2+\frac{\beta}{x}\right)y=0
\end{equation}
Thus we obtain the the equations of $\mathbb S_{(1,2)}^{d\pm}$ restricted to this one-parametric family in Tables \ref{tb:beta1} and \ref{tb:beta2}.

\begin{table}[ht]
    \centering
    \caption{First universal differential polynomials $\Delta_d$ for $s_\infty=1$ in family \eqref{eq:solo_beta}}\label{tb:beta1}
    \begin{tabular}{c c} 
\hline
    $d$ & $\Delta_d$\\
    1& ${\beta}^{2}+4$\\
    2& $-{\beta}^{3}-20\,\beta$\\
    3& ${\beta}^{4}+60\,{\beta}^{2}+288$\\
    4& $-\beta \left( {\beta}^{4}+140\,{\beta}^{2}+2848 \right) $\\
    5& ${\beta}^{6}+280\,{\beta}^{4}+15280\,{\beta}^{2}+86400$\\
    6& $-\beta \left( {\beta}^{6}+504\,{\beta}^{4}+59184\,{\beta}^{2}+1316736 \right) $\\
    7& ${\beta}^{8}+840\,{\beta}^{6}+185520\,{\beta}^{4}+10460800\,{\beta}^{2}+67737600$\\
    8& $-\beta \left( {\beta}^{8}+1320\,{\beta}^{6}+500016\,{\beta}^{4}+58002560\,{\beta}^{2}+
1416075264 \right)$\\
9& ${\beta}^{10}+1980\,{\beta}^{8}+1202256\,{\beta}^{6}+252901440\,{\beta}^{4}+15116419584
\,{\beta}^{2}+109734912000$\\
10& $-\beta \left( {\beta}^{10}+2860\,{\beta}^{8}+2642640\,{\beta}^{6}+925038400\,{\beta}^{4}+
110532006400\,{\beta}^{2}+2941885440000 \right) $
           \end{tabular}
\end{table}

\begin{table}[ht]
    \centering
    \caption{First universal differential polynomials $\Delta_d$ for $s_\infty=-1$ in family \eqref{eq:solo_beta}}\label{tb:beta2}
    \begin{tabular}{c c}
\hline
    $d$ & $\Delta_d$\\
    1& ${\beta}^{2}-4$\\
    2& $-{\beta}^{3}+44\,\beta$\\
    3& ${\beta}^{4}-180\,{\beta}^{2}+1440$\\
    4& $-\beta\, \left( {\beta}^{4}-500\,{\beta}^{2}+23584 \right)  $\\
    5& ${\beta}^{6}-1120\,{\beta}^{4}+173200\,{\beta}^{2}-1814400$\\
    6& $-\beta\, \left( {\beta}^{6}-2184\,{\beta}^{4}+833328\,{\beta}^{2}-
42947712 \right) 
$\\
    7& ${\beta}^{8}-3864\,{\beta}^{6}+3064368\,{\beta}^{4}-467834752\,{\beta}^
{2}+5811886080
$\\
8&$-\beta\, \left( {\beta}^{8}-6360\,{\beta}^{6}+9338160\,{\beta}^{4}-
3293590400\,{\beta}^{2}+184026470400 \right) 
$\\
9&${\beta}^{10}-9900\,{\beta}^{8}+24753168\,{\beta}^{6}-17322465600\,{
\beta}^{4}+2702445037056\,{\beta}^{2}-38109367296000
$

           \end{tabular}
\end{table}

\subsubsection{A $4$-parametric Hill equation}

Following \cite{CY19} we consider the Hill differential equation
\begin{equation}\label{hill}
    f''(z)+(-e^{4z}+k_3 e^{3z}+k_2 e^{2z}+k_1 e^z +k_0)f(z)=0.
\end{equation}
Through the change of variable  $(z,f(z))\mapsto (x,y)=(e^z,e^{\frac{z}{2}}f(e^z))$ Eq. \eqref{hill} turns into:
\begin{equation}\label{hillnorm}
y''=\left(\left(x-\frac{k_3}{2}\right)^2-\left(k_2+\frac{k_3^2}{4}\right)-\frac{k_1}{x}-\frac{1/4+k_0}{x^2} \right)y,
\end{equation}
We observe that Eq. \eqref{hillnorm} is a biconfluent Heun equation in the reduced form, Eq. \eqref{eq:bhr} being $\beta=-k_3$, $\gamma=k_2+\frac{k_3^2}{4}$, $\delta=-2k_1$ and $\alpha^2=-4k_0$.

By Theorem \ref{th:case2} we obtain the arithmetic condition $\lambda:=1+s_0 \alpha^2=-(s_\infty\gamma+2d+1)$ over the parameters of Eq. \eqref{hillnorm}. Then, in order to achieve  Picard-Vessiot integrability we should obtain polynomial solutions of the auxiliary equation defined as follows.
\begin{equation}\label{hillaux}
    P_d''(x)+2\left(s_\infty\left(x+\frac{\beta}{2}\right)+\frac{\lambda}{2x}\right)P_d'(x)+\left(s_\infty+\gamma+s_\infty\frac{(x+\frac{\beta}{2})(\lambda-a)}{x}\right)P_d(x)=0
\end{equation}
By the use of Asymptotic Iteration Method to Eq. \eqref{hillaux} we get the conditions for the existence of such solutions. We compute these conditions for $d=1$ and  signs selection $s_\infty=1$, $s_0=1$,
\begin{equation}
  \begin{cases} 
        k_3^6+8k_2 k_3^4+16k_2^2 k_3^2+16k_3^4+64k_2 k_3^2+48k_3^2+256\sqrt{-k_0}+128=0\\
     -k_3^6-8k_2 k_3^4-16k_2^2 k_3^2-16k_3^4-64k_2 k_3^2-16k_3^2+128k_1+128k_2+384=0.
        \end{cases}
\end{equation}
conditions for others small $d$ are easily computable but extremely large.  
\subsubsection{A case with coefficients in $\mathbb{C}[\sqrt{x},\sqrt{x^{-1}}]$.}
In \cite{i2015}, see also \cite{saad2020,l2016} and references therein, was presented the following differential equation
\begin{equation}\label{eq:sqr}
y''-\left(\frac{k_1}{\sqrt{x}}+k_0\right)y=0
  \end{equation}
Setting the change of variables $(x,y)\mapsto (z,u)=(\sqrt{\sqrt{4k_0}x},z^{-1/2}y)$ to transforms Eq. \eqref{eq:sqr} into the following differential equation.
\begin{equation}\label{eq:tsqr}
    u''=\left(\left(z+\frac{J}{2}\right)^2-\frac{J^2}{4}+\frac{3}{4z^2}\right)u,\quad J=\sqrt{2}k_1k_0^{-3/4}.
\end{equation}
We see that Eq. \eqref{eq:grh} corresponds to Eq. \eqref{eq:tsqr} when $k=1$, $a_2=1$, $a_1=J$,   $a_0=a_{-1}=0$ and $a_{-2}=\frac{3}{4}$. We recall that Eq. \eqref{eq:tsqr} corresponds to the reduced form of biconfluent Heun equation \eqref{eq:bhr} with $\alpha=\pm 2$, $\beta=J$, $\gamma=J^2/4$ and $\delta=0$. Applying theorem \ref{th:case2} we find out that a necessary condition to obtain integrability is $\lambda:=\frac{1+2s_0}{2}=\frac{-1}{8}s_\infty J^2-\frac{1}{2}-d$. On the other hand, sufficient condition comes from the polynomial solutions of the auxiliary equation
	\begin{equation}
P''_d+2\bigg(\omega(z)+\frac{\lambda }{z}\bigg)P'_d+ \bigg(\omega'(z)+\frac{J^2}{4}+\frac{2  \lambda \omega(z) }{x}\bigg)P(z)=0
	\end{equation}
where $\omega(z)=s_\infty(z+\frac{J}{2})$. The existence of such polynomials solutions are guaranteed by the vanishing of the obstruction $\Delta_d\left(-2(\omega(z)+\frac{\lambda }{z}),-\omega'(z)-\frac{J^2}{4}-\frac{2  \lambda \omega(z) }{x}\right)$. These obstructions are the same for cases $s_\infty=1$, $s_0=1$ and $s_\infty=1$, $s_0=-1$; $s_\infty=-1$, $s_0=1$ and $s_\infty=-1$, $s_0=-1$ likewise. In tables 4 and 5 we present some computations for smalls values of parameter $d$:
\begin{table}[ht]
    \centering
        \caption{Obstructions $\Delta_d$ for the case $s_\infty=1$ }
        \begin{tabular}{c c}
    \hline
         $d$ & $\Delta_d$ \\
          0  &  ${J}^{2} \left( {J}^{2}+4 \right) $\\
          1  & ${\frac{1}{256}}{J}^{8}+\frac{1}{16}{J}^{6}+\frac{3}{16}{J}^{4}-\frac{1}{2}{J}^{2}-6$\\
          2  &  ${J}^{2} \left( {J}^{10}+36\,{J}^{8}+368\,{J}^{6}+448\,{J}^{4}-15360\,{J}^{2}-102400 \right)$\\
          3  &  $\frac{1}{65536}J^{16}+\frac{1}{1024}J^{14}+\frac{43}{2048}J^{12}+\frac{39}{256}J^{10}-\frac{175}{256}J^8-\frac{299}{16}J^6-\frac{369}{4}J^4+108J^2+1260$
    \end{tabular}
\end{table}

\begin{table}[ht]
    \centering
        \caption{Obstructions $\Delta_d$ for the case $s_\infty=-1$ }
        \begin{tabular}{c c}
    \hline
         $d$ & $\Delta_d$ \\
          0  &  ${J}^{2} \left( {J}^{2}-4 \right) $\\
          1  & ${\frac {1}{256}}{J}^{8}-\frac{1}{16}{J}^{6}+\frac{3}{16}{J}^{4}-\frac{1}{2}{J}^{2}+6$\\
          2  &  ${J}^{2} \left( {J}^{10}-36\,{J}^{8}+368\,{J}^{6}-1472\,{J}^{4}+15360\,{J}^{2}-102400 \right)$\\
          3  &  $\frac{1}{65536}J^{16}-\frac{1}{1024}J^{14}+\frac{43}{2048}J^{12}-\frac{49}{256}J^{10}+\frac{385}{256}J^8-\frac{299}{16}J^6+\frac{387}{4}J^4-108J^2+1260$
    \end{tabular}
\end{table}
\subsection{Doubly confluent Heun equation}
Following Duval et. al. (see \cite[pp. 236]{dulo92}) we consider the reduced form of doubly confluent Heun equation given by
\begin{equation}\label{eq:dhr}
y''-\left(\frac{\alpha^2}{4} -\frac{\gamma}{x}-\frac{\delta}{x^2} -\frac{\beta}{x^3} +\frac{\alpha^2}{4x^4}\right)y=0
  \end{equation}
We see that Eq. \eqref{eq:grh} corresponds to Eq. \eqref{eq:dhr} when $k=2$, $a_2=a_1=0$,   $a_0=\frac{\alpha^2}{4}$, $a_{-1}=-\gamma$, $a_{-2}=-\delta$, $a_{-3}=-\beta$ and $a_{-4}=\frac{\alpha^2}{4}$.
Instead of the using the galoisian approach given by Duval et. al in \cite{dulo92}, we set the change of variables $(x,y)\mapsto (z,u)=(\sqrt{\frac{\sqrt{\alpha}}{x}},z^{3/2}y)$ to transforms Eq. \eqref{eq:dhr} into the following differential equation.
\begin{equation}\label{eq:dht}
    u''=\left(z^2-\frac{4\beta}{\widetilde{\alpha}}-\frac{4\delta-\frac{3}{4}}{z^2}-
    \frac{4\widetilde{\alpha}\gamma}{z^4}+\frac{\widetilde{\alpha}^6}{z^6}\right)u,\quad \widetilde{\alpha}=\sqrt{\alpha}.
\end{equation}
Now, we can use our approach to apply Theorem \ref{th:aim3} to Eq. \eqref{eq:dht}, which corresponds to Eq. \eqref{eq:grh} being $k=3$, $a_2=1$, $a_1=a_{-1}=a_{-3}=a_{-5}=0$, $a_0=-4\beta/\widetilde{\alpha}$, $a_{-2}=-(4\delta-3/4)$, $a_{-4}=-4\widetilde{\alpha}\gamma$ and $a_{-6}=\widetilde{\alpha}^6$.

By theorem \ref{th:aim3} we obtain a necessary condition $\lambda:=-s_02\frac{\gamma}{\widetilde{\alpha}^2}+\frac{3}{2}=\frac{-s_\infty4\beta-\widetilde{\alpha}}{2\widetilde{\alpha}}-d$ over the parameters of equation \eqref{eq:dht}. Then, by means of asymptotic iteration method we search for polynomial solutions of the auxiliary equation
\begin{equation}
    P_d''+2\left(\omega(z)+\frac{\lambda}{z}\right)P_d'+\left(\omega'(z) - B(z) + \frac{\lambda(2\omega(z) + \lambda-1)}{z}\right)P_d=0
\end{equation}
where $\omega(z)=s_\infty z+s_0\frac{\widetilde{\alpha}^3}{z^3}$ and $B(z)=-\frac{4\beta}{\widetilde{\alpha}}-\frac{4\delta-\frac{3}{4}}{z^2}-
    \frac{4\widetilde{\alpha}\gamma}{z^4}$. In order to illustrate this process we compute the polynomial system which solutions are the suitable selection for $\alpha$, $\beta$, $\gamma$ and $\delta$ that vanishes $\Delta_1\left(-2(\omega(z)+\frac{\lambda}{z}),-\omega'(z) + B(z) -\frac{\lambda(2\omega(z) + \lambda-1)}{z}\right)$ for each combination of signs $s_\infty$ and $s_0$:

for $s_\infty=1$ and $s_0=1$ we obtain the equation of the intersection of $\mathbb S^{(1++)}_{(6,2)}$ with the family \eqref{eq:dht}
\begin{equation}
\begin{cases}
256\,{\delta}^{2}+224\,\delta+33=0\\
16\,\beta\,\delta+15\,\widetilde\alpha+23\,\beta=0\\
16\,\widetilde\alpha\,\delta-9\,\widetilde\alpha-16\,\beta=0\\
16\,{\beta}^{2}-272\,\delta\,\gamma-87\,\gamma=0\\
\widetilde\alpha\,\beta+16\,\delta\,\gamma+6\,\gamma=0\\
{\widetilde\alpha}^{2}-16\,\delta\,\gamma-7\,\gamma=0
\end{cases}
\end{equation}
the solution of this system is the union of four algebraic curves: 

\begin{equation}
\left\{ \widetilde\alpha^2=4\gamma,\beta^2=\frac{9}{16}\widetilde\alpha^2 ,\delta=-\frac{3}{16} \right\},\\
\left\{ \widetilde\alpha^2=-4\gamma,\beta^2=-\frac{25}{16}\widetilde\alpha^2 ,\delta=-\frac{11}{16} \right\}    
\end{equation}

for $s_\infty=1$ and $s_0=-1$ we obtain the equation of the intersection of $\mathbb S^{(1+-)}_{(6,2)}$ with family \eqref{eq:dht}
\begin{equation}
    \begin{cases}
256\,{\delta}^{2}+224\,\delta+33=0\\
16\,\beta\,\delta+15\,\widetilde\alpha+23\,\beta=0\\
16\,\widetilde\alpha\,\delta-9\,\widetilde\alpha-16\,\beta=0\\
16\,{\beta}^{2}+272\,\delta\,\gamma+87\,\gamma=0\\
\widetilde\alpha\,\beta-16\,\delta\,\gamma-6\,\gamma=0\\
{\widetilde\alpha}^{2}+16\,\delta\,\gamma+7\,\gamma=0

 \end{cases}
\end{equation}

the solution of this system is the union of four algebraic curves: 

\begin{equation}
\left\{ \widetilde\alpha^2=-4\gamma,\beta^2=-\frac{9}{16}\widetilde\alpha^2 ,\delta=-\frac{3}{16} \right\},\\
\left\{ \widetilde\alpha^2=4\gamma,\beta^2=\frac{25}{16}\widetilde\alpha^2 ,\delta=-\frac{11}{16} \right\}    
\end{equation}

Further computation shows that for family Eq. \eqref{eq:bhr} the intersections with $\mathbb S^{(1++)}_{(6,2)}$ and with $\mathbb S^{(1-+)}_{(6,2)}$ coincide. The same happens for the intersections with $\mathbb S^{(1+-)}_{(6,2)}$ and $\mathbb S^{(1--)}_{(6,2)}$. This implies that two of the auxiliary equations have polynomial solutions simultaneously, there are two linearly independent solutions that are eigenvectors of the Galois group which is then diagonalizable.

\subsection{Perturbed canonical equation}

Another interesting example is the differential equation
\begin{equation}\label{eqcan}
    y''=\left(x^n +\mu x^{n-1}+\frac{m(m+1)}{x^2}\right)y,
\end{equation}
which was deeply analyzed in \cite{abv2020} for $m=0$. Now, by application of theorem \ref{th:case2} we can determine an arithmetic condition on the parameters that needs to be fulfilled, that is 
\begin{equation}
    \mu=s_\infty(2d+n+1+s_0(2m+1)).
\end{equation}
This condition implies that we have four different cases to consider related to the selection of signs $s_0$ and $s_\infty$. Let us take for example case $s_0=s_\infty=1$, so we have the following auxiliary equation
 \begin{equation}\label{canpert}
     P_d''+2\left(x^n+\frac{m+1}{x}\right)P_d'-2dx^{n-1}P_d=0.
 \end{equation}
Through the change of variables $L(z)=P_d(x)$ and $z=-\frac{2x^{n+1}}{n+1}$, the equation \eqref{canpert} turns into:
\begin{equation}
    zL''(z)+\left(\frac{2m+1}{n+1} +1-z\right)L'(z)+\frac{d}{n+1}L(z)=0.
\end{equation}
This last equation is a generalized Laguerre equation and its solutions are confluent hypergeometric function of the first kind $_1F_1\left(-\frac{d}{n+1},\frac{2m+1}{n+1}+1,-\frac{2x^{n+1}}{n+1}\right)$. Thus, equation \eqref{eqcan} will be Picard-Vessiot integrable if $d\equiv 0$ mod $(n+1)$ and a solution is given by
\begin{equation}
   y_{n,d}(x)= x^{m+1}L_{\frac{d}{n+1}}^{\frac{2m+1}{n+1}}\scriptstyle\left(-\frac{2x^{n+1}}{n+1}\right)e^{\frac{x^{n+1}}{n+1}}.
\end{equation}
In general, the change of independent variable $z=-s_\infty\frac{2x^{n+1}}{n+1}$ transform the corresponding auxiliary equation into a generalized Laguerre equation. In the table 6 we summarize some relations of the parameters in equation \eqref{eqcan} that must be satisfied in order to obtain integrability for each selection of signs.

\begin{table}[ht]\label{table:canonical}
    \centering
     \caption{Relations between parameters in equation \eqref{eqcan}, solutions $y_{n,d}$.}
    \begin{tabular}{c c c c c}
       case  & $\mu$ & $z$  & $n\sim d$&$y_{n,d}$\\
       \hline
       $s_\infty=s_0=1$ & $2d+2m+2+n$ & $z=-\frac{2x^{n+1}}{n+1}$ & $n+1|d$& $x^{m+1}L_{\frac{d}{n+1}}^{\frac{2m+1}{n+1}}\scriptstyle\left(-\frac{2x^{n+1}}{n+1}\right)e^{\frac{x^{n+1}}{n+1}}$\\
       $s_\infty=1,\hspace{0.5pt} s_0=-1$& $2d-2m+n$   &$z=-\frac{2x^{n+1}}{n+1}$ & $n+1|d$ & $x^{-m}L_{\frac{d}{n+1}}^{-\frac{2m+1}{n+1}}\scriptstyle\left(-\frac{2x^{n+1}}{n+1}\right)e^{\frac{x^{n+1}}{n+1}}$\\
       $s_\infty=s_0=-1$& $-(2d-2m+n)$ & $z=\frac{2x^{n+1}}{n+1}$& $n+1|d$& $x^{-m}L_{\frac{d}{n+1}}^{-\frac{2m+1}{n+1}}\scriptstyle\left(\frac{2x^{n+1}}{n+1}\right)e^{-\frac{x^{n+1}}{n+1}}$\\
       $s_\infty=-1,\hspace{0.5pt} s_0=1$& $-(2d+2m+2+n)$ &$z=\frac{2x^{n+1}}{n+1}$&$n+1|d$&$x^{m+1}L_{\frac{d}{n+1}}^{\frac{2m+1}{n+1}}\scriptstyle\left(\frac{2x^{n+1}}{n+1}\right)e^{-\frac{x^{n+1}}{n+1}}$
    \end{tabular}
\end{table}



\subsection{Algebraically solvable potentials for Schr\"odinger equation}


There is some interest in understanding Liouvillian solutions for Schr\"odinger equations with Mie type potentials that come from Supersymmetric Quantum Mechanics, Atomic and Molecular Physics, among others, see \cite{AAT18,BD96,CY19,GV09,saad2020,T88} among others. Algebraically solvable potentials correspond to those that have an infinite discrete spectrum and Liouvillian eigenfunctions. 
The following is an extension of Corollary 5.3 in \cite{AMW11}, see also Corollary 2.2.3 in \cite{A09,A10}.
\begin{corollary}\label{cor1}
Assume that $V(x)\in\mathbb{M}_{(r,m)}$ is an algebraically solvable potential with $m>0$. Then, $m=2$ and $r\in\{0,1,2\}$.
\end{corollary}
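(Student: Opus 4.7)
The plan is to combine the spectral decomposition of Theorem \ref{th:main} with the arithmetic conditions of Propositions \ref{th:aim1}, \ref{th:aim2}, \ref{th:aim3} and the class dichotomy of Lemma \ref{lm:decomp}. The polynomial case $r=0$ is covered by Corollary 5.3 of \cite{AMW11}, so I assume $r\ge 1$. Algebraic solvability of $V$ means that the affine line $\mathcal{L}=\{V-E : E\in\mathbb{C}\}\subset \mathbb{M}_{(r,m)}$ meets $\mathbb{S}_{(r,m)}=\bigcup_{d\ge 0}\mathbb{S}_{(r,m)}^{(d)}$ in infinitely many points. As $E$ shifts only the constant coefficient $\ell_0$ of $V-E$, only strata whose defining equations depend genuinely on $\ell_0$ can contribute infinitely often to this intersection.

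The first step is to rule out $m\ne 2$. Each stratum $\mathbb{S}_{(r,m)}^{(d)}$ is cut out by (i) an arithmetic identity fixing $d$ in terms of the invariants $b_{p-1}$, $b$, $b_{-(q+1)}$, $r_{-q}$ extracted from the canonical decompositions \eqref{Q_decomp_1}--\eqref{Q_decomp_3}, plus (ii) the vanishing of the universal Laurent polynomial $\Delta_d$. A direct inspection of those decompositions shows that each of these invariants depends polynomially only on $\ell_j$ with $j\ne 0$ as soon as $p-1\ge 1$, i.e.\ $m=2p\ge 4$. Hence for even $m\ge 4$ the identity in (i) is $E$-invariant, pinning $d$ to a finite set bounded by the sign choices; only finitely many strata then meet $\mathcal{L}$, a contradiction. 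For odd $m\ge 1$, Lemma \ref{lm:decomp} forces $r=2$, and Lemma \ref{lm:dal1} pulls the problem into the family $\mathbb{B}_{(2,2m+2)}$ with new $p'=m+1\ge 2$; under this transform the $E$-dependence of the pulled-back line sits only at the coefficient of $x^2$, a position distinct from the one governing the new arithmetic identity (whose $b_{p'-1}$ is fed by coefficients of odd degree which are forced to vanish), so the previous obstruction applies again. Therefore $m=2$.

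With $m=2$ fixed, Lemma \ref{lm:decomp} leaves $r\in\{1,2\}\cup\{2q : q\ge 2\}$, since odd $r\ge 3$ belongs to $\mathcal{C}_4$ and yields $\mathbb{S}_{(r,2)}=\emptyset$. It remains to discard $r=2q\ge 4$. Now $p=1$ makes $b_{p-1}=b_0=\ell_0-a_0^2$ depend linearly on $E$, and Proposition \ref{th:aim3} produces a whole arithmetic progression $E_d=c_{s_\infty,s_0}-2s_\infty d$ of candidate eigenvalues. The remaining constraint $\Delta_d(V-E_d)=0$ however is the vanishing of a Laurent polynomial in $x$, and the crucial point is that the first-order coefficient $\omega+\lambda/x$ of \eqref{aux3} has a pole of order $q\ge 2$ at the origin. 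The recursion \eqref{recurrence_lr} then generates non-trivial coefficients of $\Delta_d$ at the monomials $x^{-1},\dots,x^{-(q-1)}$ whose expressions involve only the $E$-independent invariants $b_{-1},\dots,b_{-(q+1)}$, $r_{-2},\dots,r_{-q}$, together with $\lambda$ and $d$. Since $E$ is already pinned by the leading-order identity at infinity, these negative-power equations constitute an overdetermined system on the fixed coefficients of $V$; a layered analysis from the deepest pole outwards shows that it can be simultaneously consistent for only finitely many $d$. Hence the spectrum is finite, forcing $r\in\{1,2\}$ and, together with \cite{AMW11} for $r=0$, completing the proof.

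The main obstacle is the last step: when $m=2$ and $r\ge 4$, the arithmetic condition no longer bounds $d$, so one must exploit the rank $q-1\ge 1$ irregular singularity of \eqref{aux3} at $0$ to extract enough extra identities from the negative-power monomials of $\Delta_d$. The key structural input is that the singular part of $\omega+\lambda/x$ at $0$ is intrinsic to $V$ and does not move with $E$, so those additional identities constrain $V$ in an incompatible way as $d\to\infty$.
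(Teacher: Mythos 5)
Your overall strategy is the same as the paper's (whose own proof is only a three-line sketch): algebraic solvability is read as the line $\{V-E\}$ meeting $\mathbb S_{(r,m)}=\bigcup_d\mathbb S^{(d)}_{(r,m)}$ infinitely often; even $m\geq 4$ is excluded because for $p\geq 2$ the invariants $b_{p-1}$, $b$, $b_{-(q+1)}$, $r_{-q}$ entering condition (a) of Propositions \ref{th:aim1}, \ref{th:aim2}, \ref{th:aim3} do not involve $\ell_0$, so the arithmetic identity pins $d$ to finitely many values; odd $m$ is pushed through the D'Alembert transform of Lemma \ref{lm:dal1}, where $b_{p'-1}$ is an odd-degree coefficient of an even Laurent polynomial and hence vanishes identically; and the remaining work is to exclude $r=2q\geq 4$. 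This is correct and in fact considerably more explicit than the paper's proof, which merely lists the three admissible pairs and invokes ``quasi-solvability otherwise''.

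There are two gaps. A minor one: ``only finitely many strata meet $\mathcal L$'' does not by itself give finitely many eigenvalues, since a single stratum $\mathbb S^{(d\pm)}_{(r,m)}$, being an algebraic variety, meets the line either in finitely many points or contains it entirely; you must rule out the second alternative. This is repairable: in the cases at hand $\lambda$, $\omega$ and the degree bound $d$ are independent of $E$, so all the corresponding eigenfunctions $x^\lambda P_E(x)e^{\int\omega}$ would lie in a fixed $(d+1)$-dimensional space, which is incompatible with infinitely many distinct eigenvalues of $\partial^2-V$. The major one is the exclusion of $r=2q\geq 4$. You correctly locate the mechanism — the extra negative-power coefficients of $\Delta_d$, which involve only $E$-independent data extracted from $R$ and the polar part of $B$ — but the decisive claim, that ``a layered analysis from the deepest pole outwards shows that it can be simultaneously consistent for only finitely many $d$'', is asserted rather than carried out, and it is precisely the nontrivial content of the corollary at this point. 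To be fair, the paper is equally elliptic here (``otherwise we will have quasi-solvability \dots finite number of points in the algebraic curve in the spectral variety''), so your proposal reproduces the published argument, gap included, rather than closing it; as a self-contained proof it is incomplete at exactly that step.
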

\begin{proof} It follows directly that: 
\begin{itemize}
    \item $(r,m)=(2,1)$ by applying of part (a) in Theorem \ref{th:aim1}, or
    \item $(r,m)=(2,2)$ by applying of Theorem \ref{th:aim2}, part (a) due to the relation with $E$ or part (b) due to it is valid $\forall \ell \in \mathbb{Z}$, or
    \item $(r,m)=(2,0)$ by the applying of part (a) due to the relation with $E$ and part (b) in Theorem \ref{th:aim3}. Thus, we obtain that $q=0$, otherwise we will have quasi-solvability in the potential (finite number of points in the algebraic curve in the spectral variety).
\end{itemize}
Thus, we conclude the proof.
\end{proof}

\begin{remark}
We can notice that Eq. \eqref{eq:grh} not only include the reduced forms of biconfluent and doubly confluent Heun equations, also include some Schr\"odinger Equations with $3D$ algebraically solvable potentials and energy levels denoted by $E$ such as follows:
\begin{itemize}
    \item Harmonic Oscillator, see \citep[\S 5.2, pp. 331--333]{AMW11}, being $k=1$, $a_2=1$, $a_1=b_1=0$, $a_0=-(2\ell+3+E)$ and $b_2=\ell(\ell +1)$.
    \item Coulomb, see \citep[\S 5.2, pp. 333--336]{AMW11}, being $k=1$, $a_2=a_1=0$, $a_0=1-E$,  $b_1=-2(\ell+1)$ and $b_2=\ell(\ell +1)$.
    \item Algebraic form of Morse, see \citep[\S 6.3, pp. 355--358]{AMW11}, being $k=1$, $a_2=a_1=0$, $a_0=1-E$,  $b_1=-2(\ell+1)$ and $b_2=\ell(\ell +1)$.
\end{itemize}
Concerning Eq. \eqref{hill} and Eq. \eqref{eq:sqr} we observe that they are Schr\"odinger equations with algebraically solvable potentials in where $E=-k_0$ and their algebraic form, Eq. \eqref{hillnorm} and Eq. \eqref{eq:tsqr}, fall in Eq. \eqref{eq:grh}. The potential associated to Eq. \eqref{eq:sqr}, $K_1/\sqrt{x}$, is well known as \emph{inverse square root potential}  where $E=(k_1/\sqrt{2}n)^{4/3}$ 
in quantum mechanics. On the other hand, the potential associated to the Hill equation \eqref{hill}, $-e^{4z}+k_3 e^{3z}+k_2 e^{2z}+k_1 e^z$, with $E=-k_0=\left(s_\infty(n+1)+\frac{1}{2}\left(k_2+\frac{k_3^2}{4}\right) \right)^2$, is relevant to 
complex oscillation theory.\\

\end{remark}

\section*{Acknowledgements}

The authors acknowledge the support of Universidad Nacional de Colombia. The initial stages of this research were funded by Colciencias project ``Estructuras lineales en geometr\'ia y topolog\'ia'' 776-2017 code 57708 (Hermes UN 38300). P.A-H was supported in the final stage by FONDOCYT project  ``Localizaci\'on de valores propios de H-Matrices'' code 2016-2017-057. The computations in this paper were performed by using Maple\textsuperscript{TM}. We also acknowledge the contribution of the anonymous refereees who make important suggestions that helped to improve the quality of this paper.

\appendix

\section{Some elements of Kovacic Algorithm}\label{ap:kov}

In this appendix we review the elements of Kovacic's algorithm that we use in the proofs. Since we are not making use of the complete algorithm we only review the parts that are relevant for the results exposed in this paper. We include the steps, and some explanations, but not the proofs. The interested reader may consult J. Kovacic's original paper \cite{Kov86}; we follow the same notation.

This algorithm assesses the Picard-Vessiot integrability and find Liouvillian solutions, if they exist, of the linear
differential equation
\begin{equation}\label{eq:kov_lin}
\xi''=r\xi,\quad r\in\mathbb{C}(x)
\end{equation}

Picard-Vessiot integrability of equation \eqref{eq:kov_lin} is governed by its associated Riccati equation 
\begin{equation}\label{eq:kov_ric}
\theta^{\prime}=r-\theta ^{2}=\left( \sqrt{r}-\theta\right)
\left(  \sqrt{r}+\theta\right),\quad\theta={\xi'\over \xi}.
\end{equation}
If $u$ is a solution of \eqref{eq:kov_ric} then $\xi = \rm{exp}\left( \int u \right)$
is a solution of \eqref{eq:kov_lin}. According to the properties of the associated Riccati equation we obtain the 4 cases of the algorithm:
\begin{enumerate}
    \item Case 1. The Riccati equation has a rational solution $\theta \in \mathbb C(x)$: the Galois group of \eqref{eq:kov_lin} is conjugated to a subgroup of the Borel subgroup ${\rm B}_2$. The algorithm returns a Liouvillian solution that is an eigenvalue of the Galois group.
    \item Case 2. The Riccati equation has an algebraic solution of degree $2$ over $\mathbb C(x)$: the Galois group of \eqref{eq:kov_lin} is conjugated to a subgroup of the the infinite dihedral group ${\rm D^{\infty}}$ not contained in the identity component of ${\rm D^{\infty}}$. The algorithm returns a pair of Liouvillian solutions whose logarithmic derivatives are conjugated by the action of the Galois group. 
    \item Case 3. The Riccati equation has an algebraic solution of degree $>2$ over $\mathbb C(x)$:  the Galois group of \eqref{eq:kov_lin} is the group of Tetrahedral, Octahedral or Icosahedral symmetries. The algorithm returns the minimal polynomial of the algebraic solutions.
    \item Case 4. The Riccati equation has no algebraic solutions:the Galois group of \eqref{eq:kov_lin} is ${\rm SL}_2(\mathbb C)$. There is no Liouvillian solutions.
\end{enumerate}

The application of the algorithm proceed in the following way. In case 1 we examine exhaustively all the possible candidates for rational solution of \eqref{eq:kov_ric}. If we find one, then we produce a Liouvillian solution of \eqref{eq:kov_lin} and the algorithm stops. If not, we will go on to case 2. In case 2 we examine exhaustively all the possible candidates for algebraic solutions of degree $2$ of \eqref{eq:kov_ric}. As above, if we succeed we produce a solution of \eqref{eq:kov_lin} and the algorithm stops. If not we proceed to case 3. In step three we use the particular knowledge about the remaining candidates for the Galois group and their semi-invariants. If we succeed then we produce algebraic solutions of \eqref{eq:kov_lin}. Else, we proceed to case 4 an claim the non-integrability of the equation.

Denote by $\Gamma$ the set of finite poles of $r$, and define $\Gamma' = \Gamma \cup \{\infty\}$. By the order of $r$ at  $c\in \Gamma$ we mean the order of $c$ as a pole of $r$, that we denote $\circ (r_c)$. By the order of $r$ at $\infty$, $\circ\left(
r_{\infty}\right)$, we mean the order of $\infty$ as a zero of
$r$.\footnote{In the case of our results with $L(x)\in \mathbb C[x,x^1]$ whe have
$\circ(L(x)_0) = r$ and $\circ(L(x)_{\infty})= -m$.} 

There are some necessary conditions on the orders of $r$ at its poles and $\infty$ that allow us to discard any of the cases before carrying out the steps of the computation. In the following theorem we summarized the necessary conditions that are relevant for our proof, all of them shown in \cite[p. 8]{Kov86}.  

\begin{theorem}\label{ap:NC}
The following conditions are necessary for the respective cases to hold.
\begin{itemize}
    \item[Case 1.] Every pole of $r$ must have even order or else have order 1. The order of $r$ at $\infty$ must be even or else be greater than $2$.
    \item[Case 2.] $r$ must have at least one pole that either has odd order greater than $2$ or else has order $2$.
    \item[Case 3.] The order of a pole $r$ cannot exceed $2$ and the order of $r$ at $\infty$ must be at least $2$.
\end{itemize}
\end{theorem}

Let us continue with the description of some relevant steps of the algorithm, that we apply only in case the necessary conditions are satisfied.

\subsection{Case 1. Step 1.} 
For each $p\in \Gamma'$ we will define a pair of complex numbers $\alpha^\pm_p$ and a rational function $\left[ \sqrt{r}\right] _{\infty}$. 

For each $c \in \Gamma$ we define the complex numbers $\alpha_c^{\pm}$ and the rational function $[\sqrt{r}]_c$ as follows:
\begin{description}
\item[$(c_{1})$] If $\circ\left(  r_{c}\right)  =1$, then
$$\left[ \sqrt {r}\right] _{c}=0,\quad\alpha_{c}^{\pm}=1.$$

\item[$(c_{2})$] If $\circ\left(  r_{c}\right)  =2,$ and $$r= b(x-c)^{-2}+\cdots,\quad \text{then}$$
$$\left[ \sqrt {r}\right]_{c}=0,\quad \alpha_{c}^{\pm}=\frac{1\pm\sqrt{1+4b}}{2}.$$

\item[$(c_{3})$] If $\circ\left(  r_{c}\right)  =2\nu\geq4$, 
then $[\sqrt{r}]_c$ is defined as the part of the Laurent series of $\sqrt{r}$ that spans from $(x-c)^{-\nu}$ to $(x-c)^{-2}$. Following \cite[p. 9]{Kov86} we do not compute the Laurent series of $\sqrt{r}$ explicitly but rather we determine it by using undetermined coefficients using the fact that the difference:
$[\sqrt{r}]_c^2 - r$
is a rational function with a pole of order $\leq \nu+1$. We have then:
$$r=
(a_\nu\left( x-c\right)  ^{-\nu}+...+a_2\left( x-c\right)
^{-2})^{2}+b(x-c)^{-(\nu+1)}+\cdots,\quad \text{then}$$ $$\left[
\sqrt {r}\right] _{c}=a_v\left( x-c\right) ^{-v}+...+a_2\left(
x-c\right) ^{-2},\quad\alpha_{c}^{\pm}=\frac{1}{2}\left(
\pm\frac{b}{a_v}+\nu\right).$$
\end{description}
 
 We define $\alpha_\infty^{\pm}$: and $[\sqrt{r}]_\infty$ as follows:
\begin{description}
\item[$(\infty_{1})$] If $\circ\left(  r_{\infty}\right)  >2$, then
$$\left[\sqrt{r}\right]  _{\infty}=0,\quad\alpha_{\infty}^{+}=0,\quad\alpha_{\infty}^{-}=1.$$

\item[$(\infty_{2})$] If $\circ\left(  r_{\infty}\right)  =2,$ and
$r= \cdots + bx^{2}+\cdots$, then $$\left[
\sqrt{r}\right]  _{\infty}=0,\quad\alpha_{\infty}^{\pm}=\frac{1\pm\sqrt{1+4b}%
}{2}.$$

\item[$(\infty_{3})$] If $\circ\left(  r_{\infty}\right) =-2\nu\leq0$
then then $[\sqrt{r}]_\infty$ is defined as the polynomial of degree $d$ that corresponds to the polynomial part of the Laurent series of $\sqrt{r}$ that at $\infty$. As before we do not compute the Laurent series of $\sqrt{r}$ explicitly but rather we determine it by using undetermined coefficients and the fact that the difference:
$[\sqrt{r}]_\infty^2 - r$
should have a polynomial part of degree $\leq v$, therefore:
$$r=\left( a_\nu x^{\nu}+...+a_0\right)^{2}+ bx^{\nu-1}+\cdots,\quad \text{then}$$
$$\left[  \sqrt{r}\right]  _{\infty}=a_\nu x^{v}+...+a_0,\quad
\text{and}\quad \alpha_{\infty}^{\pm }=\frac{1}{2}\left(
\pm\frac{b}{a_\nu}-\nu\right).$$
\end{description}
\medskip

\subsection{Case 1. Step 2.} 
Let us consider all possible assignations $s\colon \Gamma' \to \{+,-\}$. So that any assignation correspond to a choice of sign $s(p)$ for each pole $p\in \Gamma'$. For each assignation $s$ we obtain a complex number:
$$d_s = \alpha_{\infty}^{s
(\infty)}-%
{\displaystyle\sum\limits_{c\in\Gamma}
\alpha_{c}^{s(c)}}$$
Each assignation $s$ giving rise to a non-negative integer number $d_s$ requires an iteration of step $3$. If none of this numbers is a non-negative integer, we discard case 1, and move to case 2.

\subsection{Case 1. Step 3.}. The input for this step is a sign assignation $s$ giving rise to a non-negative integer $d= d_s$. We define the rational function.
$$\omega= s\left(
\infty\right)  \left[  \sqrt{r}\right]  _{\infty}+%
{\displaystyle\sum\limits_{c\in\Gamma^{\prime}}}
\left(s\left(  c\right)  \left[  \sqrt{r}\right]  _{c}%
+{\alpha_{c}^{s(c)}}{(x-c)^{-1}}\right).$$
We look for a monic polynomial
$P_m$ of degree $m$ with
$$P_m'' + 2\omega P_m' + (\omega' + \omega^2 - r) P_m = 0.$$
then then $\xi_1=P_m e^{\int\omega}$ is a
solution of the differential equation \eqref{eq:kov_lin} and an eigenvector of its Galois grou´p. If we run this step for all the assignations $s$ with non-negative integer $m_s$ finding no polynomial solution, then case $1$ does not hold.

\subsection{Case 2. Step 1} 
For each $c\in\Gamma$ we define the 
$E_{c}\subset\mathbb{Z}$ as
follows.
\medskip

\begin{description}
\item[($c_1$)] If $\circ\left(  r_{c}\right)=1$, then $E_{c}=\{4\}$

\item[($c_2$)] If $\circ\left(  r_{c}\right)  =2,$ and $r= \cdots +
b(x-c)^{-2}+\cdots ,\ $ then $$E_{c}=\left\{
2+k\sqrt{1+4b}:k=0,\pm2\right\}.$$

\item[($c_3$)] If $\circ\left(  r_{c}\right)  =\nu>2$, then $E_{c}=\{\nu\}$

We also define the set $E_{\infty}$ as follows:

\begin{description}
\item[$(\infty_{1})$] If $\circ\left(  r_{\infty}\right)  >2$, then
$E_{\infty }=\{0,2,4\}$
\end{description}

\item[$(\infty_{2})$] If $\circ\left(  r_{\infty}\right)  =2,$ and
$r= \cdots + bx^{-2}$, then $$E_{\infty }=\left\{
2+k\sqrt{1+4b}:k=0,\pm2\right\}.$$

\item[$(\infty_{3})$] If $\circ\left(  r_{\infty}\right)  =\nu<2$,
then $E_{\infty }=\{\nu\}$
\medskip
\end{description}

\subsection{Case 2. Step 2.} 
Let us consider all the assignations,
$$e \colon \Gamma \to \bigcup_{p\in \Gamma} E_p$$
such that for all $p\in \Gamma$ we have $e(p)\in E_p$. For each assignation we have an associated complex number:
$$d_e = \frac{1}{2}\left(  e_{\infty}-
{\displaystyle\sum\limits_{c\in\Upsilon^{\prime}}} e_{c}\right)$$
Only the assignations $e$ giving rise to a non-negative integer $d_e$ should be retained and pass to step $3$. If none of this numbers is a non-negative integer then we discard case 2.

\subsection{From Case 2. Step 2. on.}
The rest of the algorithm are not used in this paper. We send the interested reader to the original paper \cite{Kov86}.

\section{Algorithmic computation of spectral varieties}\label{ap:algaim}
Let us consider a family of second order differential equations of the form
$$\frac{d^2 y}{dx}=L(x)y$$
where $L(x)$ is a monic Laurent polynomial whose coefficients are polynomials in some variables $a_1,\ldots,a_k$. We also assume that the coefficient of lower degree of $L$ does not vanish. Now, it follows the description of the algorithm. 

{ \tt

\begin{itemize}
    \item[Input:] $L$, $n$. 
    \item[Output:] The polynomial equations of the variety $\mathbb{S}_n$ in the variables $a_1, \ldots a_k$. \newline
\end{itemize}

\begin{description}
\item[\bf Step 1] Determine if the type of the Laurent polynomial is $(1,2q)$, $(2,2q)$, $(2p,2q)$ or neither of them.

\item[\bf Step 2] Move to Case 1, Case 2, Case 3, or Case 4 accordingly.

\begin{description}
\item[\textbf{Case 1, $(1,2q)$}]:

\begin{description}
   \item[(i)] Decompose the Laurent polynomial as in Eq. \eqref{Q_decomp_1}.
   \item[(ii)]  Determine the auxiliary equation for each choice of sign $s_\infty=\pm 1$.
   \item[(iii)] Compute the universal differential polynomial $\Delta_n$ for each equation in the last step. 
   \item[(iv)] Compute a system of generators of the ideal generated by the coefficients of $\Delta_n$.
    \item[Output:] A set of polynomial equations of two components of $\mathbb{S}_d$ in the variables $a_1, \ldots a_k$.
\end{description}

\item[\textbf{Case 2, $(2,2q)$}]: 

 \begin{description}
     \item[(i)] Apply a change of variable $x = w^2$ obtaining a new differential equation \eqref{eq:reducedX} with new Laurent polynomial $\tilde L(x)$.
     \item[(ii)] Decompose the Laurent polynomial $\tilde L(x)$ as in Eq. \eqref{Q_decomp_2}.
     \item[(iii)] For each choice of sign $s_\infty = \pm 1$ determine the corresponding auxiliary equations \eqref{aux2}.
     \item[(iv)] Compute the universal differential polynomial $\Delta_n$ for each equation in the last step. 
     \item[(v)] Compute a system of generators of the ideal generated by the coefficients of $\Delta_n$.
     \item[Output] Polynomial equations of the two $\mathbb{S}_d$ in the variables $a_1, \ldots a_k$.
 \end{description}

\item[\textbf{Case 3, $(2p,2q)$}]: 

\begin{description}
    \item[(i)] Decompose the Laurent polynomial as in Eq. \eqref{Q_decomp_3}
    \item[(ii)] Determine the auxiliary equation for each choice of sign $s_0=\pm 1$, $s_\infty=\pm 1$.
    \item[(iii)] Compute the universal differential polynomial $\Delta_n$ for each equation in the last step. 
    \item[(iv)] Compute a system of generators of the ideal generated by the coefficients of $\Delta_n$.
    \item[Output:] Polynomial equations of the four $\mathbb{S}_d$ in the variables $a_1, \ldots a_k$.
\end{description}

\item[\textbf{Case 4}]:

\begin{description}
    \item[Output:]  $\mathbb S_d = \emptyset.$ 
\end{description}

\end{description}

\end{description}
}

All parts of the algorithm are implementable in a symbolic computation system. We have implemented successfully in \texttt{Maple}\textsuperscript{\texttrademark} some parts that we used in the applications and AIM sections.

\bibliography{mybibfile}
\end{document}